\newtheorem{thm}{Theorem}
\newtheorem{lem}[thm]{Lemma}
\newtheorem{prop}[thm]{Proposition}
\def \Z{{\mathbb Z}}
\def \Q{{\mathbb Q}}
\def \wH{{\mathrm H}}
\def\Gal{{\rm Gal}}
\def\cA{{\mathcal A}}
\def\cB{{\mathcal B}}
\def\cC{{\mathcal C}}
\def\cE{{\mathcal E}}
\def\cF{{\mathcal F}}
\def\cM{{\mathcal M}}
\def\card#1{\left|#1\right|}
\def\({\left(}
\def\){\right)}
\def\mand{\qquad \mbox{and} \qquad}
\def\ssum{\mathop{\sum\, \ldots \sum}}
\numberwithin{equation}{section}
\numberwithin{thm}{section}
\begin{document}

\title[Multiplicatively dependent vectors]{On multiplicatively dependent vectors of algebraic numbers}

\author[F.~Pappalardi]{Francesco Pappalardi}
\address{Dipartimento di Matematica e Fisica, Universit\`a Roma Tre, Roma, I--00146, Italy}
\email{pappa@mat.uniroma3.it}

\author[M.~Sha]{Min Sha}
\address{School of Mathematics and Statistics, University of New South Wales,
 Sydney, NSW 2052, Australia}
\email{shamin2010@gmail.com}

\author[I. E.~Shparlinski]{Igor E. Shparlinski}
\address{School of Mathematics and Statistics, University of New South Wales,
 Sydney, NSW 2052, Australia}
\email{igor.shparlinski@unsw.edu.au}

\author[C. L.~Stewart]{Cameron L. Stewart}
\address{Department of Pure Mathematics, University of Waterloo,
Waterloo, Ontario, N2L 3G1, Canada}
\email{cstewart@uwaterloo.ca}

\keywords{Multiplicatively dependent vectors, divisors, smooth numbers, naive height, Weil height}

\subjclass[2010]{11N25, 11R04}

\begin{abstract}
In this paper, we give several asymptotic formulas for the number of multiplicatively dependent vectors of  algebraic numbers of fixed degree, or within a fixed number field, and bounded height. 
\end{abstract}

\maketitle

\section{Introduction} 

\subsection{Background}

Let $n$ be a positive integer, $G$ be a multiplicative group and let $\pmb{\nu}=(\nu_1,\ldots,\nu_n)$ be in $G^n$. We say that $\pmb{\nu}$ is multiplicatively dependent if there is a non-zero vector $\mathbf{k}=(k_1,\ldots,k_n)\in \mathbb{Z}^n$ for which
\begin{equation} \label{eq:MultDep}
\pmb{\nu}^{\mathbf{k}}=\nu^{k_1}_1\cdots \nu^{k_n}_n=1.
\end{equation}
We denote by $\cM_n(G)$ the set of multiplicatively dependent vectors in $G^n$.

For instance, the set $\cM_n(\mathbb{C}^*)$ of multiplicatively dependent vectors in $(\mathbb{C}^*)^n$ is of Lebesgue measure zero, since it is a countable union of sets of measure zero. Further, if we fix an exponent vector $\mathbf{k}$ the subvariety of $(\mathbb{C}^*)^n$ determined by~\eqref{eq:MultDep} is an algebraic subgroup of $(\mathbb{C}^*)^n$.

For multiplicatively dependent vectors of algebraic numbers  there are two kinds of questions which have been extensively studied. 
The first question concerns the exponents in~\eqref{eq:MultDep}. Given a multiplicatively dependent vector $\pmb{\nu}$  it follows from the work of Loxton and van der Poorten~\cite{LvdP,vdPL}, Matveev~\cite{Matveev},  and Loher and Masser~\cite[Corollary~3.2]{LM} (attributed to K.~Yu) that there is a relation of the form~\eqref{eq:MultDep} with a non-zero vector $\mathbf{k}$ with small coordinates. The second question is to find comparison relations among the heights of the coordinates. For example, 
Stewart~\cite[Theorem~1]{Stewart} has given an inequality for the heights of the coordinates of such a vector (of low multiplicative rank, in the terminology of Section~\ref{sec:rank}), and a lower bound for the sum of the heights of the coordinates is implied in~\cite{Vaaler}.  

In this paper, we obtain severa asymptotic formulas for the number of multiplicatively dependent $n$-tuples whose coordinates are algebraic numbers of fixed degree, or within a fixed number field, and bounded height.  
Aside from the results mentioned above, to the best of our knowledge, this natural question has never been addressed in the literature. 

We remark that the above question is interesting in its own right, but is also partially motivated by 
the works~\cite{OstSha,SV},   where multiplicatively independent vectors play an important role.

\subsection{Rank of multplicative independence}
\label{sec:rank}

The following notion plays a crucial role in our argument, and is also of independent interest. 

Let $\overline{\Q}$ be an algebraic closure of the rational numbers $\Q$. For each $\pmb{\nu}$ in $(\overline{\Q}^*)^n$, we define $s$, the \textit{multiplicative rank} of $\pmb{\nu}$, in the following way. If $\pmb{\nu}$ has a coordinate which is a root of unity, we put $s=0$; otherwise let $s$ be the largest integer with $1\leq s\leq n$ for which any $s$ coordinates of $\pmb{\nu}$ form a multiplicatively independent vector. Notice that
\begin{equation} \label{mn}
0\leq s\leq n-1,
\end{equation}
whenever $\pmb{\nu}$ is multiplicatively dependent.

\subsection{Conventions and notation}
\label{sec:con}

For any algebraic number $\alpha$, let
$$
f(x)=a_dx^d+\cdots+a_1x+a_0
$$
be the minimal polynomial of $\alpha$ over the integers $\Z$  (so with content $1$ and positive leading coefficient). Suppose that $f$ is factored as
$$
f(x)=a_d(x-\alpha_1)\cdots (x-\alpha_d)
$$
over the complex numbers $\mathbb{C}$. The \textit{naive height} $\wH_0(\alpha)$ of $\alpha$ is given by
$$
\wH_0(\alpha)=\max\{|a_d|,\ldots,|a_1|,|a_0|\} ,
$$
and $\wH(\alpha)$, the height of $\alpha$, also known as the \textit{absolute Weil height} of $\alpha$, is defined by
$$
\wH(\alpha)=\(a_d\prod^d_{i=1}\max\{1,|\alpha_i|\}\)^{1/d}.
$$

Let $K$ be a number field of degree $d$ (over $\Q$).
We use the following standard notation:
\begin{itemize}
\item $r_1$ and $r_2$ for the number of real and non-real embeddings of $K$, respectively, and put $r=r_1+r_2-1$;
\item $D,h, R$ and  $\zeta_K$  for the discriminant, class number,  regulator
and  Dedekind zeta function  of $K$, respectively;
\item   $w$ for the number of roots of unity in $K$.
\end{itemize}
Note that $r$ is exactly the rank of the unit group of the ring of algebraic integers of $K$. As usual, let $\zeta(s)$ be the Riemann zeta function.

For any real number $x$, let $\lceil x\rceil$ denote the smallest integer greater than or equal to $x$, and let $\lfloor x\rfloor$ denote the greatest integer less than or equal to $x$.

We always implicitly assume that $H$ is large enough, in particular so that the logarithmic 
expressions $\log H$ and $\log \log H$ are well-defined. 

In the sequel, we use the Landau symbols $O$ and $o$ and the Vinogradov symbol $\ll$. We recall that the assertions $U=O(V)$ and $U\ll V$  are both equivalent to the inequality $|U|\le cV$ with some positive constant $c$, while $U=o(V)$ means that $U/V\to 0$.
We also use the asymptotic notation $\sim$.

For  a finite set $S$ we use $\card{S}$ to denote its cardinality. 

Throughout the paper, the implied constants in the symbols $O$ and $\ll$ only depend on the given number field $K$, the given degree $d$, or the dimension $n$.

\subsection{Counting vectors within a number field} 
\label{sec:fixK}

Let $K$ be a number field of degree $d$. Denote the set of algebraic integers of $K$ of height at most $H$ by $\cB_K(H)$ and the set of algebraic numbers of $K$ of height at most $H$ by $\cB^*_K(H)$. 
Set
$$
B_K(H)=\card{\cB_K(H)} \mand B^*_K(H) = \card{\cB^*_K(H)}.
$$

Put
$$
C_1(K)=\frac{2^{r_1}{(2\pi)}^{r_2}d^r}{|D|^{1/2}r!}.
$$
It follows directly from the work of Widmer~\cite[Theorem~1.1]{Widmer2} (taking $n=e=1$ there)  that
\begin{equation} \label{BK}
B_K(H)=C_1(K)H^d(\log H)^r+O\(H^d(\log H)^{r-1}\).
\end{equation}
If $r=0$, then~\eqref{BK} can be improved to (see~\cite[Theorem~1.1]{Barroero1})
\begin{equation} \label{BK0}
B_K(H)=C_1(K)H^d+O(H^{d-1}).
\end{equation}
We remark that the estimate in~\eqref{BK} is stated in~\cite[Chapter~3, Theorem~5.2]{Lang} without the explicit constant $C_1(K)$, and moreover Barroero~\cite{Barroero2} has  obtained similar estimates for the number of algebraic $S$-integers with fixed degree and bounded height.

Define
$$ 
C_2(K)=\frac{2^{2r_1}(2\pi)^{2r_2}2^rhR}{|D|w \zeta_K(2)}.
$$
Schanuel~\cite[Corollary to Theorem~3]{Schanuel} proved in 1979 (see also~\cite[Equation~(1.5)]{Masser2}) that
\begin{equation} \label{B*K}
B^*_K(H)=C_2(K)H^{2d}+O\( H^{2d-1}(\log H)^{\sigma(d)} \),
\end{equation}
where $\sigma(1)=1$ and $\sigma(d)=0$ for $d>1$.
Note that the height in~\cite{Schanuel} is our height to the power $d$.

For any positive integer $n$, we denote by $L_{n,K}(H)$ the number of multiplicatively dependent $n$-tuples whose coordinates are algebraic integers of height at most $H$, and we denote by $L^*_{n,K}(H)$ the number of multiplicatively dependent $n$-tuples whose coordinates are algebraic numbers of height at most $H$.

Put
$$ 
C_3(n,K)=\frac{n(n+1)}{2}w C_1(K)^{n-1}.
$$

\begin{thm} 
\label{thm:MnK}
Let $K$ be a number field of degree $d$ over $\Q$ and let $n$ be an integer with $n\geq 2$. We have
\begin{equation} \label{MnK}
\begin{split}
L_{n,K}(H)=C_3(n,K)&H^{d(n-1)}(\log H)^{r(n-1)}  \\
&+O\(H^{d(n-1)}(\log H)^{r(n-1)-1}\);
\end{split}
\end{equation}
if  furthermore   $K=\Q$ or is an imaginary quadratic field, we have  
\begin{equation} \label{MnQ}
L_{n,K}(H)=C_3(n,K)H^{d(n-1)}+O\(H^{d(n-3/2)}\).
\end{equation}
\end{thm}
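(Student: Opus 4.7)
The plan is to stratify dependent vectors in $\cB_K(H)^n$ by the simplest multiplicative relation they satisfy, and to show that only two families contribute to the main term. The first family consists of vectors with at least one coordinate equal to a root of unity in $K$. Fixing an index $i$ ($n$ choices) and a root of unity $\zeta\in K$ ($w$ choices), and letting the remaining $n-1$ coordinates vary freely in $\cB_K(H)$, we obtain $nwB_K(H)^{n-1}$ tuples, with corrections for tuples having two or more root-of-unity coordinates being $O(B_K(H)^{n-2})$. The second family consists of vectors in which no coordinate is a root of unity but two coordinates have ratio a root of unity in $K$; choosing the pair $\{i,j\}$ ($\binom{n}{2}$ choices), the ratio $\zeta$ ($w$ choices), the value $\nu_j\in\cB_K(H)$, and the remaining $n-2$ coordinates freely gives $\binom{n}{2}wB_K(H)^{n-1}$ tuples (the constraint $\wH(\nu_i)=\wH(\nu_j)\le H$ is automatic), with overlap corrections again of order $O(B_K(H)^{n-2})$. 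Applying~\eqref{BK}, the sum produces the main term
\[
\left(n+\binom{n}{2}\right)wC_1(K)^{n-1}H^{d(n-1)}(\log H)^{r(n-1)}=C_3(n,K)H^{d(n-1)}(\log H)^{r(n-1)},
\]
with error of order $O(H^{d(n-1)}(\log H)^{r(n-1)-1})$ coming from the expansion of~\eqref{BK}.

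The main obstacle is to show that all \emph{other} multiplicatively dependent tuples contribute only $O(H^{d(n-1)}(\log H)^{r(n-1)-1})$. I would split these by the multiplicative rank $s$ of Section~\ref{sec:rank}. In the rank-$1$ case a pair of coordinates $(\nu_i,\nu_j)$ satisfies a primitive multiplicative relation $(a,b)$ with $(|a|,|b|)\ne(1,1)$; the equality $\wH(\nu_i)^{|a|}=\wH(\nu_j)^{|b|}$ forces one coordinate into $\cB_K(H^{\min(|a|,|b|)/\max(|a|,|b|)})$ while the other is then determined up to $\max(|a|,|b|)$ values. Summing over primitive $(a,b)$ (whose sizes are bounded by $(\log H)^{O(1)}$ via~\cite[Corollary~3.2]{LM}) and multiplying by $B_K(H)^{n-2}$ for the free coordinates gives a contribution of lower order. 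For rank $s\ge 2$, a relation $\prod\nu_{i_j}^{k_j}=1$ involves $s+1\ge 3$ essential coordinates; fixing any $s$ of them determines the last up to finite choice, and the additional height constraint combined with~\eqref{BK} delivers the needed savings.

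For the sharper estimate~\eqref{MnQ}, applicable when $r=0$, I would repeat the argument using~\eqref{BK0} in place of~\eqref{BK}. The dominant error now arises not from the expansion of the main term but from the rank-$1$ non-root-of-unity-ratio pairs; the worst case $(|a|,|b|)=(2,1)$ yields $O(H^{d/2})$ admissible pairs, and multiplying by $B_K(H)^{n-2}=O(H^{d(n-2)})$ produces precisely the error term $O(H^{d(n-3/2)})$ stated in~\eqref{MnQ}.
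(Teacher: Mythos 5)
Your main-term bookkeeping is correct and matches the paper: the two families (one coordinate a root of unity; two coordinates with ratio a root of unity) are exactly the sources of $L_{n,K,0}(H)$ and of the dominant $(t,\pm t)$ part of $L_{n,K,1}(H)$, and they produce the constant $C_3(n,K)=\bigl(n+\binom{n}{2}\bigr)wC_1(K)^{n-1}$. The gaps are in the error analysis, and they are genuine. For rank-$1$ pairs with a relation $\nu_i^{a}=\nu_j^{b}$, $0<a<b$, your bound --- $\nu_j$ ranges over $\cB_K(H^{a/b})$ and $\nu_i$ is then determined up to finitely many values --- is a valid upper bound for each fixed $(a,b)$, but it is far too lossy to sum. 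Lemma~\ref{lem:exponent} only bounds the exponents by a power of $\log H$, so you must handle near-diagonal pairs such as $(a,b)=(b-1,b)$ with $b\asymp\log H$, for which your bound is $\gg B_K(H^{1-1/b}) \gg H^{d}\exp(-d\log H/b)(\log H)^r \gg H^d(\log H)^r$ up to a positive constant; summing over $b$ then gives order $H^d(\log H)^{r+1}$ for a single pair of indices, which after multiplying by $B_K(H)^{n-2}$ \emph{exceeds the main term}, so \eqref{MnK} does not follow. Likewise your claim for \eqref{MnQ} that the worst case is $(|a|,|b|)=(2,1)$ is false within your scheme: already $(a,b)=(2,3)$ gives $O(H^{2d/3})$, worse than the needed $H^{d/2}$. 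The missing idea is the paper's gcd parametrization: with $t=\gcd(t_1,t_2)$ and $\beta$ a root of $x^{t_1t_2}-\gamma$, one shows $\nu_1$ is determined, up to a root of unity of $K$, by an algebraic integer of $K$ of height at most $H^{t/t_2}\le H^{1/2}$ (indeed $\le H^{1/3}$ unless $t_2=2t_1$), which collapses the entire sum over exponents to $O(H^{d/2}(\log H)^{r+O(1)})$ and, when $r=0$, yields precisely the $H^{d(n-3/2)}$ error in \eqref{MnQ}. Relatedly, classifying by $(|a|,|b|)$ conflates signs: same-sign relations such as $\nu_i\nu_j=\zeta$ (minimal relation $(m,m)$) have $\min/\max=1$ and give no saving in your scheme; the paper instead notes both coordinates are then units and applies Lemma~\ref{lem:units}.

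The second gap is the rank $s\ge 2$ case. Your argument --- fix any $s$ essential coordinates, the last is determined up to finite choice --- gives only $O\bigl(B_K(H)^{n-1}(\log H)^{O(1)}\bigr)$, i.e.\ main-term order up to logarithms, with no saving at all; and there is no ``additional height constraint'': the relation yields only an inequality among the heights, it does not force any coordinate into a smaller box. The paper handles this via Proposition~\ref{prop:MnKm}, whose proof is the technical heart of the matter: a dependence among $s+1$ coordinates forces the norms $N_{K/\Q}(\nu_i)$ of at least $\lceil (s+1)/2\rceil$ of them to be composed of a prescribed set of $O(\log H)$ primes, so by the de Bruijn smooth-number bound (Lemma~\ref{lem:psixy}) each such norm takes at most $\exp(O(\log H/\log\log H))$ values, and elements of $K$ with given norm and bounded height number at most $\exp(O(\log H/\log\log H))$ (Lemma~\ref{lem:coeff}, via divisor bounds and unit counting). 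This is what produces the saving $H^{-d(\lceil (s+1)/2\rceil-1)}$ used in \eqref{eq:MnKH}. Since the proposition is stated in the paper before the theorem, you could simply have invoked it; but the elementary substitute you propose does not deliver the required bound.
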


We remark that when $K=\Q$ a better error term than that given in~\eqref{MnQ} is 
stated in Theorem~\ref{thm:Mnd} below, more precisely, see~\eqref{Mnd'}.

We  estimate $L^*_{n,K}(H)$ next. Put
$$
C_4(n,K)=n^2 w C_2(K)^{n-1}.
$$

\begin{thm} 
\label{thm:MnK2}
Let  $K$ be a number field of degree $d$, and let $n$ be an integer with $n\geq 2$. 
Then, we have 
\begin{equation}
L^*_{n,K}(H) =C_4(n,K)H^{2d(n-1)}
+O\(H^{2d(n-1)-1}g(H)\),
\end{equation}
where 
\begin{equation*}
g(H)=\left\{ \begin{array}{ll}
      \log H & \textrm{if $d=1$ and $n=2$}\\
      \exp(c\log H/\log\log H) & \textrm{if $d=1$ and $n>2$}\\
      1 & \textrm{if $d>1$ and $n\ge 2$},
                 \end{array} \right.
\end{equation*}
and $c$ is a positive number depending only on $n$. 
\end{thm}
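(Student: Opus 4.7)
The plan is to split $L^*_{n,K}(H)$ into a main family whose count gives the leading term plus a remainder absorbed by the error. Let the main family $\cE\subseteq\cB^*_K(H)^n$ consist of those $\pmb{\nu}$ satisfying at least one of the following $n^2$ ``trivial'' multiplicative dependencies: (i) $\nu_i$ is a root of unity in $K$ for some $i$ ($n$ conditions); (ii) $\nu_i/\nu_j$ is a root of unity in $K$ for some $i<j$ ($\binom{n}{2}$ conditions); or (iii) $\nu_i\nu_j$ is a root of unity in $K$ for some $i<j$ ($\binom{n}{2}$ conditions). In each case one coordinate is pinned to one of $w$ values once the remaining $n-1$ coordinates are chosen freely in $\cB^*_K(H)$, and in (ii) and (iii) the pinned coordinate has the same height as its partner, so the constraint $\wH\leq H$ imposes nothing extra.

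Each of the $n^2$ conditions contributes $w B^*_K(H)^{n-1}$ tuples, while the intersection of any two distinct conditions pins at least two coordinates and so contributes $O((B^*_K(H))^{n-2})=O(H^{2d(n-2)})$. Inclusion-exclusion combined with Schanuel's estimate~\eqref{B*K} yields
$$
\card{\cE} = n^2 w\, B^*_K(H)^{n-1} + O(H^{2d(n-2)}) = C_4(n,K)H^{2d(n-1)} + O(H^{2d(n-1)-1}(\log H)^{\sigma(d)}),
$$
which already matches the claim when $d>1$ or when $d=1,\ n=2$, provided the remainder $L^*_{n,K}(H)-\card{\cE}$ is appropriately controlled.

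A multiplicatively dependent $\pmb{\nu}$ not in $\cE$ must satisfy a primitive relation $\pmb{\nu}^{\mathbf{k}}=1$ whose $\mathbf{k}$ either has at least three nonzero entries or has exactly two nonzero entries with some $|k_\ell|\geq 2$. By the Loxton-van der Poorten, Matveev, and Loher-Masser-Yu bound~\cite[Corollary~3.2]{LM} applied to $\wH(\nu_i)\leq H$, one may assume $\|\mathbf{k}\|_\infty$ is polylogarithmic in $H$, leaving only polylogarithmically many candidate $\mathbf{k}$. For each such $\mathbf{k}$, I fix all involved coordinates except one $\nu_\ell$ with $k_\ell\neq 0$; then $\nu_\ell$ must be one of at most $w|k_\ell|$ $k_\ell$-th roots in $K$ of a prescribed element of $K^*$. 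For $d>1$ a generic element is rarely a $k_\ell$-th power in $K$, cutting the count by a factor of $H^{-1}$ or better; and when only two $k_i$ are nonzero with some $|k_\ell|\geq 2$, the height-scaling relation $\wH(\nu_\ell)=\wH(\nu_{\ell'})^{|k_{\ell'}|/|k_\ell|}$ forces $\wH(\nu_{\ell'})\leq H^{1/|k_\ell|}$, again keeping the contribution below $H^{2d(n-1)-1}$.

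The main obstacle is the case $d=1,\ n\geq 3$: relations of the form $\nu_{i_1}\nu_{i_2}\nu_{i_3}=\pm 1$ and its sign and permutation variants in $(\Q^*)^n$ must be counted carefully. Fixing the $n-3$ other coordinates, one reduces to counting pairs $(\nu_{i_1},\nu_{i_2})\in\cB^*_\Q(H)^2$ with $\wH(\nu_{i_1}\nu_{i_2})\leq H$, a multiplicative-energy problem on rationals. Standard divisor-sum estimates bound this by $O(H^2\exp(c\log H/\log\log H))$; multiplying by $(B^*_\Q(H))^{n-3}$ and summing over the polylogarithmically many non-trivial $\mathbf{k}$ delivers the claimed error factor $g(H)=\exp(c\log H/\log\log H)$ when $n>2$.
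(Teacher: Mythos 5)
Your main-term computation is sound and coincides in substance with the paper's: your $n^2$ ``trivial'' dependencies are exactly the paper's rank-$0$ tuples together with the rank-$1$ tuples attached to exponent vectors $(t,\pm t)$, and inclusion--exclusion plus Schanuel's estimate~\eqref{B*K} does give $C_4(n,K)H^{2d(n-1)}$ with admissible error. The remainder analysis, however, has genuine gaps. First, in the two-term case $\nu_i^{k_i}\nu_j^{k_j}=1$ with $|k_i|\ne|k_j|$, your height-scaling step is wrong: from $\wH(\nu_i)^{|k_i|}=\wH(\nu_j)^{|k_j|}$ one only gets $\wH(\nu_j)\le H^{|k_i|/|k_j|}$, not $H^{1/|k_j|}$. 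For the exponent pair $(2,3)$ this yields only $\wH(\nu_j)\le H^{2/3}$, hence $O(H^{4d/3})$ pairs, which for $d=1$, $n=2$ already exceeds the claimed error $O(H\log H)$ (the true count for $\nu_1^2=\nu_2^3$ is $O(H^{2d/3})$, since $\nu_2=\rho^2$, $\nu_1=\pm\rho^3$). The paper closes this with the gcd trick around \eqref{eq:nu1nu2}--\eqref{eq:Htheta}: with $t=\gcd(t_1,t_2)$ one manufactures $\alpha\in K$ with $\wH(\alpha)^{t_2/t}=\wH(\nu_1)$, so $\nu_1$ is determined, up to roots of unity, by an element of height at most $H^{t/t_2}\le H^{1/3}$ unless $t_2=2t_1$; the ratio-$2$ case is the true bottleneck and is treated separately.

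Second, for relations with at least three nonzero exponents your reduction produces no saving at all: fixing all coordinates but one and extracting $k_\ell$-th roots (at most $w$ of which lie in $K$) bounds the count by $B^*_K(H)^{n-1}$ times a polylogarithmic factor, i.e.\ by $H^{2d(n-1)+o(1)}$ --- the size of the main term. The assertion that for $d>1$ ``a generic element is rarely a $k_\ell$-th power, cutting the count by $H^{-1}$'' is an unproven heuristic that restates the problem, since bounding how often $\prod_{i\ne \ell}\nu_i^{k_i}$ is a $k_\ell$-th power in $K^*$ is again a multiplicative-dependence count. The paper's engine here is Proposition~\ref{prop:MnKm}: at least $\lceil (s+1)/2\rceil$ coordinates have norms composed only of primes dividing a fixed integer of size $H^{O(1)}$, so de Bruijn's smooth-number bound (Lemma~\ref{lem:psixy}) together with the norm-equation count (Lemma~\ref{lem:coeff}, via Wigert's theorem and the unit bound) leaves each such coordinate at most $\exp(O(\log H/\log\log H))$ possibilities, producing the saving $H^{-d(\lceil(s+1)/2\rceil-1)}$ in \eqref{M*nKm}; for $d>1$ this saving of at least $H^{d}$ is exactly what makes $g(H)=1$ possible. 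Finally, your $d=1$ three-term estimate is false: the number of pairs $(\nu_1,\nu_2)\in\cB^*_\Q(H)^2$ with $\wH(\nu_1\nu_2)\le H$ is $\gg H^3$, not $O(H^{2+o(1)})$ --- take $\nu_1=a/b$, $\nu_2=b/c$ with $a,b,c\le H$ pairwise coprime. The correct order $H^{3+o(1)}$ still fits the budget $H^{2n-3}\exp(c\log H/\log\log H)$ after multiplying by $B^*_\Q(H)^{n-3}$, but proving the upper bound $H^{3}\exp(O(\log H/\log\log H))$ requires precisely the divisor/smooth-number argument just described, so this case cannot be dispatched by a ``standard'' energy estimate as you propose.
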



We now outline the strategy of the proofs. Given a number field $K$, we define $L_{n,K,s}(H)$ and $L^*_{n,K,s}(H)$ to be the number of multiplicatively dependent $n$-tuples of multiplicative rank $s$ whose coordinates are algebraic integers in $\cB_K(H)$ and algebraic numbers in $\cB^*_K(H)$ respectively. It follows from~\eqref{mn} that
\begin{equation} \label{MnK=}
\left\{ \begin{array}{ll}
                 L_{n,K}(H)=L_{n,K,0}(H)+\cdots+L_{n,K,n-1}(H)\\
                 \\
                 L^*_{n,K}(H)=L^*_{n,K,0}(H)+\cdots+L^*_{n,K,n-1}(H).
                 \end{array} \right.
\end{equation}
The main term in~\eqref{MnK} comes from the contributions of $L_{n,K,0}(H)$ and $L_{n,K,1}(H)$ in~\eqref{MnK=}, and the main term in Theorem~\ref{thm:MnK2} comes from the contributions of  $L^*_{n,K,0}(H)$ and $L^*_{n,K,1}(H)$ in~\eqref{MnK=}.
To prove Theorems~\ref{thm:MnK}  and~\ref{thm:MnK2},  we make use of~\eqref{MnK=} and the following result.

\begin{prop} 
\label{prop:MnKm}
Let $K$ be a number field of degree $d$. Let $n$ and $s$ be   integers with $n\geq 2$ and  $0 \le s\leq n-1$.  
Then, there exist positive numbers $c_1$ and $c_2$ which depend on $n$ and $K$, such that 
\begin{equation} \label{MnKm}
L_{n,K,s}(H)<H^{d(n-1)-d\(\lceil (s+1)/2\rceil-1\)}\exp(c_1\log H/\log\log H)
\end{equation}
and
\begin{equation} \label{M*nKm}
L^*_{n,K,s}(H)<H^{2d(n-1)-d\(\lceil (s+1)/2\rceil-1\)}\exp(c_2\log H/\log\log H).
\end{equation}

\end{prop}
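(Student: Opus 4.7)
The plan is to exploit the multiplicative rank structure to single out an $(s+1)$-subset of coordinates satisfying a non-trivial relation with bounded exponents, and then to bound the count of tuples on this subset by a divisor-type argument. For a rank-$s$ tuple with $s<n$, the definition of the rank provides some subset $T\subseteq\{1,\ldots,n\}$ of size $s+1$ on which a relation $\prod_{i\in T}\alpha_i^{k_i}=1$ holds; each $k_i$ is nonzero (since any $s$-subset is independent) and the relation is unique up to rescaling. There are $\binom{n}{s+1}=O_n(1)$ such subsets, and by the Loher--Masser bound (\cite[Corollary~3.2]{LM}) one may take $\max_i|k_i|\ll(\log H)^c$, giving $(\log H)^{O_n(1)}$ choices for $\mathbf{k}$. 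The $n-s-1$ coordinates outside~$T$ are unrestricted and contribute a factor of $B_K(H)^{n-s-1}$ (respectively $B^*_K(H)^{n-s-1}$).

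The main step is a ``halving'' of $T$: partition $T=A\sqcup B$ with $|A|=\lfloor(s+1)/2\rfloor$ and $|B|=\lceil(s+1)/2\rceil$, let $(\alpha_i)_{i\in A}$ range freely over $\cB_K(H)^{|A|}$ (contributing $B_K(H)^{|A|}$), and set $\gamma:=\prod_{i\in A}\alpha_i^{-k_i}\in K^*$. The coordinates in $B$ then have to satisfy $\prod_{j\in B}\alpha_j^{k_j}=\gamma$. In the integer case, at the level of integral ideals (after separating positive- and negative-exponent factors) this is a factorization problem on principal ideals of norm polynomial in $H$. By the standard bound $\tau(\mathfrak{a})\le\exp(O(\log N(\mathfrak{a})/\log\log N(\mathfrak{a})))$, together with the fact that each principal ideal determines its generator up to a unit of height $\le H$ (of which there are only $O((\log H)^r)$ by Dirichlet), the number of valid tuples $(\alpha_j)_{j\in B}$ for each fixed $\gamma$ is at most $\exp(c_1\log H/\log\log H)$. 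Combining all contributions and invoking the identity $|A|+n-s-1=n-\lceil(s+1)/2\rceil$ yields the bound~\eqref{MnKm}.

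The bound~\eqref{M*nKm} is obtained by the same halving, with $\cB^*_K(H)$ in place of $\cB_K(H)$ and $B^*_K(H)\ll H^{2d}$. The main obstacle lies here: in the field case the factorization count in $B$ cannot be made pointwise $\exp(O(\log H/\log\log H))$, since for instance the choice $\gamma=1$ admits roughly $B^*_K(H)^{|B|-1}$ solutions. One circumvents this by writing each $\alpha_j=\beta_j/\delta_j$ with coprime $\beta_j,\delta_j$ representing the numerator and denominator ideals of $(\alpha_j)$, clearing denominators in the relation to obtain an integer equation between the $\beta_j$, $\delta_j$ and the numerator and denominator of $\gamma$, and then applying the integer divisor bound to both sides; the sum over $(\alpha_i)_{i\in A}$ is controlled by observing that the extremal case $\gamma=1$ forces $(\alpha_i)_{i\in A}$ to consist of roots of unity and so contributes only a negligible term, while for generic $(\alpha_i)_{i\in A}$ the per-$\gamma$ count is much smaller. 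The Loher--Masser bound on $\mathbf{k}$, the number of roots of equations $\alpha^{k_{j_0}}=\delta$ in $K$, and the unit contribution are all of size $\exp(O(\log H/\log\log H))$ and are absorbed into the error term.
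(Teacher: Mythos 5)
Your treatment of \eqref{MnKm} has the right skeleton (an $(s+1)$-subset with a relation, Loher--Masser to bound the exponents, free coordinates off the subset, and a smooth-number/divisor count on the constrained half), but the halving must be done by the \emph{sign} of the exponents, not by size. If $B$ contains exponents of both signs, your key claim --- at most $\exp(c\log H/\log\log H)$ tuples $(\alpha_j)_{j\in B}$ per fixed $\gamma$ --- is false: already for $K=\Q$, $B=\{j,j'\}$ and the relation $\alpha_j\alpha_{j'}^{-1}=\gamma$, there are on the order of $H/\wH(\gamma)$ integer solutions of height at most $H$. Your parenthetical fix, ``separating positive- and negative-exponent factors,'' does not help, because after separation both sides of the ideal equation contain unknowns, so neither side is a fixed ideal and the divisor bound $\tau$ has nothing fixed to act on. The paper's choice is forced: let $I_0$ be the larger of the two sign classes, so $\card{I_0}\ge\lceil (s+1)/2\rceil$ by pigeonhole and \eqref{eq:maineq} has positive exponents on both sides; then fixing the coordinates off $I_0$ fixes the right side as an algebraic integer of norm at most $(2H)^{dn}$, each $N_{K/\Q}(\nu_i)$, $i\in I_0$, is supported on primes $\le c\log H$, and Lemma~\ref{lem:psixy} plus Lemma~\ref{lem:coeff} give the pointwise count. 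With that correction your integer-case argument coincides with the paper's.

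The field case \eqref{M*nKm} is where your proposal genuinely breaks down, and not only at $\gamma=1$ (which in fact cannot occur: $\card{A}\le s$, so $\prod_{i\in A}\alpha_i^{-k_i}=1$ with nonzero exponents would contradict multiplicative rank $s$; your roots-of-unity discussion is moot). Even with a sign-homogeneous $B$, the per-$\gamma$ count over $\cB^*_K(H)$ is polynomial in $H$ for \emph{typical} $\gamma$, because denominators cancel: over $\Q$ the pairs $(\gamma t,\,t^{-1})$ give $\gg H$ solutions of $\alpha_1\alpha_2=\gamma$ of height at most $H$. Indeed your accounting would yield $L^*_{n,K,s}(H)\ll H^{2d(n-\lceil (s+1)/2\rceil)+o(1)}$, which is \emph{strictly stronger} than \eqref{M*nKm} whenever $s\ge 2$ --- and provably false: for $n=3$, $s=2$, $K=\Q$ it reads $H^{2+o(1)}$, while the triples $(a/b,\,b/c,\,c/a)$ with $a,b,c\le H$ already show $L^*_{3,\Q,2}(H)\gg H^{3}$, matching the exponent $3$ in \eqref{M*nKm}. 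The missing idea is the paper's bookkeeping of leading coefficients: rewrite the relation as \eqref{eq:maineq1} so the smooth-norm and Lemma~\ref{lem:coeff} machinery applies to the algebraic integers $a_i\nu_i$ of height at most $2^{d-1}H^d$, but \emph{pay} $(2H)^{d}$ choices for each leading coefficient $a_i$, $i\in I_0$ --- so a constrained coordinate costs $H^{d}$, not $H^{o(1)}$, which is exactly why the true exponent is $2d(n-1)-d(\lceil (s+1)/2\rceil-1)$ and not your smaller one. This naive count is then still off by a factor $H^{d}$, which the paper recovers by arranging $I\setminus I_0\ne\emptyset$ (replacing a coordinate by its inverse, legitimate only for field elements, not integers) and showing a coordinate $\nu_{i_1}$, $i_1\in I\setminus I_0$, admits only $H^{d}\exp(O(\log H/\log\log H))$ values: its leading coefficient $a_{i_1}$ divides a fixed norm, its constant coefficient is one of $O(H^d)$ integers, and Lemma~\ref{lem:coeff} bounds the count once both extreme coefficients are fixed. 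None of this is recoverable from your numerator/denominator-ideal sketch as stated.
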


In Section~\ref{sec:low}, we show that when $K=\Q$ and $s=n-1$ (\ref{MnKm}) cannot be improved by much; see Theorem~\ref{thm:MnQm}. In particular, it does not hold with $\exp(c_1\log H/\log\log H)$ replaced by a quantity which is $o((\log H)^{(k-1)^2})$, where $n=2k$.

\subsection{Counting vectors of fixed degree}

Let $d$ be a positive integer, and let $\cA_d(H)$, respectively $\cA^*_d(H)$, be the set of algebraic integers of degree $d$ (over $\Q$), respectively algebraic numbers of degree $d$, of height at most $H$. We set
$$
A_d(H) = \card{\cA_d(H)} \mand  A^*_d(H) = \card{\cA^*_d(H)}.
$$

 Put
$$ 
C_5(d)=d2^d\prod^{\lfloor(d-1)/2\rfloor}_{j=1}\frac{d(2j)^{d-2j-1}}{(2j+1)^{d-2j}}
$$
and
$$ 
C_6(d)=\frac{d2^d}{\zeta(d+1)}\prod^{\lfloor(d-1)/2\rfloor}_{j=1}\frac{(d+1)(2j)^{d-2j}}{(2j+1)^{d-2j+1}}.
$$
It follows from the work of Barroero~\cite[Theorem~1.1]{Barroero1} that (see also~\cite[Equation~(1.2)]{Barroero1} for a previous estimate with a weaker error term which follows from~\cite[Theorem~6]{Chern})
\begin{equation} \label{Ad}
A_d(H)=C_5(d)H^{d^2}+O\( H^{d(d-1)}(\log H)^{\rho(d)} \),
\end{equation}
where $\rho(2)=1$ and $\rho(d)=0$ for any $d\ne 2$.

Further,  Masser and Vaaler~\cite[Equation~(7)]{Masser1}  have shown that (see also~\cite[Equation~(1.5)]{Masser2})
\begin{equation} \label{A*d}
A^*_d(H)=C_6(d)H^{d(d+1)}+O\(H^{d^2}(\log H)^{\vartheta(d)}\),
\end{equation}
where $\vartheta(1)=\vartheta(2)=1$ and $\vartheta(d)=0$ for any $d\geq 3$.

For any positive integer $n$, we denote by $M_{n,d}(H)$ the number of multiplicatively dependent $n$-tuples whose coordinates are algebraic integers in $\cA_d(H)$, and we denote by $M^*_{n,d}(H)$ the number of multiplicatively dependent $n$-tuples whose coordinates are algebraic 
numbers in $\cA_d^*(H)$.

For each positive integer $d$, we define $w_0(d)$ to be the number of roots of unity of degree $d$. Let $\varphi$ denote Euler's totient function. Since $\varphi(k)\gg k/\log\log k$ for any integer $k\ge 3$, it follows that
\begin{equation} \label{eq:w0w1}
w_0(d)\ll d^2 \log\log d ,
\end{equation}
 where $d\ge 3$ and the implied constant is absolute.  
 We remark that $w_0(d)$ can be zero, such as for an odd integer $d>1$. 

Given positive integers $n$ and $d$, we define $C_7(n,d)$ and $C_8(n,d)$ as 
$$
C_7(n,d)=\(nw_0(d)+n(n-1)\)C_5(d)^{n-1}
$$
and
$$
C_8(n,d)=\(nw_0(d)+2n(n-1)\)C_6(d)^{n-1}.
$$

\begin{thm} 
\label{thm:Mnd}
Let $d$ and $n$ be positive integers with $n\geq 2$. Then, the following hold. 
\begin{itemize}

\item[(i)]  We have 
\begin{equation} \label{Mnd}
M_{n,d}(H) = C_7(n,d)H^{d^2(n-1)} 
 + O\(H^{d^2(n-1)-d/2}\);
\end{equation} 
 furthermore if $d=2$ or $d$ is odd, we have 
\begin{equation} \label{Mnd'}
\begin{split}
M_{n,d}(H) = C_7&(n,d)H^{d^2(n-1)} \\
& + O\(H^{d^2(n-1)-d}\exp(c_0\log H/\log\log H)\)
\end{split}
\end{equation} 
and 
\begin{equation} \label{M2d}
M_{2,d}(H) = C_7(2,d)H^{d^2} + O\(H^{d^2-d}(\log H)^{\rho(d)}\), 
\end{equation}
where $c_0$ is a positive number which depends only on $n$ and $d$, and $\rho(d)$ has been defined in~\eqref{Ad}.  

\item[(ii)] 
We have 
\begin{equation} \label{M*nd}
 M^*_{n,d}(H) = C_8(n,d)H^{d(d+1)(n-1)}+O\(H^{d(d+1)(n-1)-d/2}\log H\); 
\end{equation} 
 furthermore if $d=2$ or $d$ is odd, we have 
\begin{equation} \label{M*nd'}
\begin{split}
 M^*_{n,d}(H) = C_8&(n,d)H^{d(d+1)(n-1)} \\ 
 & +O\(H^{d(d+1)(n-1)-d}\exp(c\log H/\log\log H)\)
 \end{split}
\end{equation} 
and 
\begin{equation} \label{M*2d}
M^*_{2,d}(H) = C_8(2,d)H^{d(d+1)} + O\(H^{d^2}(\log H)^{\vartheta(d)}\), 
\end{equation}
where $c$ is a positive number which depends only on $n$ and $d$, and $\vartheta(d)$ is defined in~\eqref{A*d}. 
\end{itemize}
\end{thm}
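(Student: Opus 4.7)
The plan is to adapt the rank-decomposition strategy that underlies Theorems~\ref{thm:MnK} and~\ref{thm:MnK2}. Write
$$
M_{n,d}(H) = \sum_{s=0}^{n-1} M_{n,d,s}(H) \mand M^*_{n,d}(H) = \sum_{s=0}^{n-1} M^*_{n,d,s}(H),
$$
where $M_{n,d,s}(H)$ and $M^*_{n,d,s}(H)$ count $n$-tuples of multiplicative rank exactly $s$ in $\cA_d(H)^n$ and $\cA_d^*(H)^n$ respectively. The main terms will arise from $s=0$ and $s=1$, while contributions from $s\geq 2$ will be absorbed into the error.

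The $s=0$ tuples are those with at least one coordinate a root of unity. By inclusion--exclusion, controlled by~\eqref{eq:w0w1},
$$
M_{n,d,0}(H) = n w_0(d) A_d(H)^{n-1} + O\bigl(A_d(H)^{n-2}\bigr),
$$
which via~\eqref{Ad} produces the $n w_0(d) C_5(d)^{n-1}$ part of $C_7(n,d)$. For $s=1$ all coordinates are non-roots of unity and some pair $(\nu_i,\nu_j)$ is multiplicatively dependent. Any such $\nu_j$ satisfies $\nu_j = \zeta\, \nu_i^{a/b}$ for a root of unity $\zeta$ and coprime integers $a,b$; since $\wH(\nu_j) = \wH(\nu_i)^{|a/b|}\le H$, and since in the algebraic-integer case $\nu_i^{-1}$ is an integer only when $\nu_i$ is a unit, the dominant contribution comes from $a/b=1$ with generic $\zeta\in\{\pm 1\}$, yielding $\nu_j\in\{\pm\nu_i\}$. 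Summing the $2$ sign choices over the $\binom{n}{2}$ pairs and leaving the remaining $n-2$ coordinates free gives the $n(n-1)C_5(d)^{n-1}$ part of $C_7(n,d)$; inclusion--exclusion over pairs, the contribution from non-generic $\zeta$ (when $\nu_i$ lies in a cyclotomic extension) and from $|a/b|\ge 2$ (saving at least $H^{d^2/2}$) are all lower order. The case $n=2$ has no rank~$\ge 2$ term, so~\eqref{M2d} follows directly from~\eqref{Ad}.

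For $s\geq 2$, the plan is to use the ideas behind Proposition~\ref{prop:MnKm}: the coordinates of the tuple generate a number field $K$ with $[K:\Q]\le d^n$, and~\eqref{MnKm} applied to each such $K$ (stratifying by degree and discriminant, using Hermite--Minkowski to bound the number of fields of a given discriminant, and controlling the discriminant in terms of $H$) delivers $M_{n,d,s}(H) \ll H^{d^2(n-1)-d/2}$, giving the error in~\eqref{Mnd}. The improved estimate~\eqref{Mnd'} available when $d$ is odd or $d=2$ reflects that for such $d$ the cyclotomic contributions simplify --- in particular $w_0(d)=0$ when $d\ge 3$ is odd since $\varphi(m)=d$ has no solution --- allowing the rank $\ge 2$ bound to tighten to $H^{d^2(n-1)-d}\exp(c_0 \log H/\log\log H)$. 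Part~(ii) for $M^*_{n,d}(H)$ runs in parallel, with~\eqref{A*d}, $C_6$, and $C_8$ replacing~\eqref{Ad}, $C_5$, and $C_7$; the coefficient $2n(n-1)$ in $C_8(n,d)$ reflects that in the algebraic-number setting both $\nu_j=\pm\nu_i$ and $\nu_j=\pm\nu_i^{-1}$ contribute to rank~$1$, the Weil height being invariant under inversion.

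The main obstacle is expected to be the rank $s\ge 2$ estimate, because Proposition~\ref{prop:MnKm} is formulated for a fixed number field, whereas in the fixed-degree setting the ambient field $\Q(\nu_1,\ldots,\nu_n)$ varies with the tuple; the plan is to stratify the count according to the isomorphism type and discriminant of the generated field, apply~\eqref{MnKm} on each stratum, and sum. A secondary technical step is to verify that all the sub-leading configurations in the rank~$1$ analysis (units, coordinates in cyclotomic extensions, and larger exponents $|a/b|\ge 2$) are genuinely absorbed by the stated error terms.
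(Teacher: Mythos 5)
Your overall architecture (rank decomposition, main terms from $s=0,1$, error from $s\ge 2$) matches the paper, but two of your steps have genuine gaps, and they are precisely the two points where the fixed-degree setting differs from the fixed-field setting.

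First, the rank $s\ge 2$ bound. You propose to stratify by the generated field $K=\Q(\nu_1,\ldots,\nu_n)$ and apply Proposition~\ref{prop:MnKm} on each stratum, but this cannot deliver the required exponent: the bound~\eqref{MnKm} for a field of degree $D=[K:\Q]$ is of size $H^{D(n-1)-D(\lceil (s+1)/2\rceil-1)}$ up to the $\exp(c_1\log H/\log\log H)$ factor, and with $D$ as large as $d^n$ this exceeds even the trivial bound $A_d(H)^n\ll H^{d^2 n}$. The estimate~\eqref{MnKm} counts \emph{all} tuples of integers of $K$ of bounded height and has no mechanism for restricting to coordinates of degree exactly $d$ inside a larger field; moreover the constants there depend on $K$, and the number of fields in your stratification grows with $H$, so uniformity fails as well. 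The paper avoids all of this by proving Proposition~\ref{prop:Mndm} directly in the fixed-degree setting: fix the exponent vector via Lemma~\ref{lem:exponent}, constrain the norms $N_{\Q(\nu_i)/\Q}(\nu_i)$ for $i\in I_0$ to a set of size $\exp(O(\log H/\log\log H))$ by the smooth-number bound (Lemma~\ref{lem:psixy}), and then count minimal polynomials with prescribed constant (and, in the $*$ case, leading) coefficient, which costs $H^{d(d-1)}$ per constrained coordinate. You should invoke~\eqref{Mndm} and~\eqref{M*ndm}; note they already save a full $H^{d}$ for $s\ge 2$, so the rank $\ge 2$ terms are \emph{not} the source of the weaker $H^{-d/2}$ error in~\eqref{Mnd}, contrary to your attribution.

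Second, and more seriously, the rank $s=1$ analysis. In the fixed-degree setting, if $\nu_j=\zeta\nu_i^{\pm 1}$ with $\zeta$ a root of unity, $\zeta\ne\pm 1$, then $\zeta\nu_i$ generally fails to lie in $\cA_d(H)$ at all, and the crux is to bound the number of $\nu_i\in\cA_d(H)$ for which some $\zeta\ne\pm 1$ preserves the degree --- the set $\cC_d(H)$ of the paper. Your claim that ``the contribution from non-generic $\zeta$ \dots\ [is] lower order'' is exactly what has to be proved, and it is nontrivial: the paper shows in Lemma~\ref{lem:special} that any such $\nu_i$ has minimal polynomial either with Galois group $\ne S_d$ (bounded by Dietmann's theorem~\eqref{eq:Galois1}, giving $|\cC_d(H)|\ll H^{d(d-1/2)}$ and hence the error $H^{d^2(n-1)-d/2}$ in~\eqref{Mnd}) or degenerate (bounded by~\eqref{eq:degenerate}, giving $H^{d(d-1)}$). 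The dichotomy between~\eqref{Mnd} and~\eqref{Mnd'} is exactly this: for $d=2$ or $d$ odd the Galois case cannot occur (for odd $d$ because $\zeta\ne\pm1$ has even degree, for $d=2$ by irreducibility of the minimal polynomial), so only the degenerate case survives and the rank-1 error improves to $H^{-d}$. It has nothing to do with $w_0(d)=0$ or with any tightening of the rank $\ge 2$ bound, as you suggest; likewise the factor $\log H$ in~\eqref{M*nd} comes from Cohen's bound~\eqref{eq:Galois2} for non-monic polynomials, which your sketch does not produce. Without a quantitative bound on $|\cC_d(H)|$ and $|\cC^*_d(H)|$ (or an equivalent of Lemma~\ref{lem:special}), your rank-1 asymptotics, and hence the stated error terms, are unproven.
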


We remark that the case when $d=1$ actually has been included in 
Theorems~\ref{thm:MnK} 
and~\ref{thm:MnK2}.
However, in this case the error term in~\eqref{Mnd'} is 
$H^{n-2+o(1)}$, which is better than that in~\eqref{MnQ} taken with $d=1$. 

The strategy to prove Theorem~\ref{thm:Mnd} is similar to that in proving Theorems~\ref{thm:MnK} and~\ref{thm:MnK2}. 
For each integer $s$ with $0\leq s\leq n-1$, we define $M_{n,d,s}(H)$ and $M^*_{n,d,s}(H)$ to be the number of multiplicatively dependent $n$-tuples of multiplicative rank $s$  whose coordinates are algebraic integers in $\cA_d(H)$ and algebraic numbers in $\cA^*_d(H)$ respectively. Just as in~\eqref{MnK=} we have
\begin{equation} \label{Mnd=}
\left\{ \begin{array}{ll}
                 M_{n,d}(H)=M_{n,d,0}(H)+\cdots+M_{n,d,n-1}(H)\\
                 \\
                 M^*_{n,d}(H)=M^*_{n,d,0}(H)+\cdots+M^*_{n,d,n-1}(H).
                 \end{array} \right.
\end{equation}
For the proof of Theorem~\ref{thm:Mnd}, we make use of~\eqref{Mnd=} and the following result.

\begin{prop} 
\label{prop:Mndm}
Let $d$, $n$   and $s$ be  integers with $d \ge 1$,    $n\geq 2$ and $0 \le s \leq n-1$.  
Then, there exist positive numbers $c_1$ and $c_2$, which depend on $n$ and $d$, such that
\begin{equation} \label{Mndm}
M_{n,d,s}(H)<H^{d^2(n-1)-d(\lceil (s+1)/2\rceil-1)}\exp(c_1\log H/\log\log H)
\end{equation}
and
\begin{equation} \label{M*ndm}
\begin{split}
M^*_{n,d,s}(H)&<H^{d(d+1)(n-1)-d(\lceil (s+1)/2 \rceil-1)}\\
&\qquad \qquad  \qquad  \exp(c_2\log H/\log\log H).
\end{split}
\end{equation}
\end{prop}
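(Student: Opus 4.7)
The plan is to split the proof into two cases based on the value of $s$, reducing each to a counting problem for tuples satisfying a multiplicative relation with small exponents.

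For $s=0$, by the definition of multiplicative rank, some coordinate of the tuple is a root of unity; since every coordinate has degree exactly~$d$, that coordinate must be one of the $w_0(d)$ roots of unity of degree~$d$. Picking its position (at most $n$ ways) and letting the remaining $n-1$ coordinates vary freely in $\cA_d(H)$ (resp.\ $\cA_d^*(H)$), combined with~\eqref{Ad} (resp.~\eqref{A*d}), yields a bound of the form in~\eqref{Mndm} (resp.~\eqref{M*ndm}), since $\lceil 1/2\rceil-1=0$. For $s\ge 1$, no coordinate is a root of unity, and by the definition of rank~$s$ there exist $s+1$ positions whose subtuple is multiplicatively dependent while every $s$-subset of the whole tuple is multiplicatively independent. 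Selecting those $s+1$ positions and filling the remaining $n-s-1$ slots freely from $\cA_d(H)$ (resp.\ $\cA_d^*(H)$) contributes $\ll H^{d^2(n-s-1)}$ (resp.\ $\ll H^{d(d+1)(n-s-1)}$), so the task reduces to bounding the number of minimally multiplicatively dependent $(s+1)$-tuples with entries in the relevant set by $H^{d^2 s-d(\lceil(s+1)/2\rceil-1)}\exp(c\log H/\log\log H)$ (resp.\ with $d(d+1)s$ replacing $d^2 s$).

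For this reduction, I would apply the Loher--Masser estimate~\cite[Corollary~3.2]{LM}, attributed to K.~Yu, to guarantee for each such tuple a non-trivial relation $\prod_i\alpha_i^{k_i}=1$ with $|k_i|\le B$ and $B\ll\exp(c_0\log H/\log\log H)$. Summing over the $O(B^{s+1})$ possible exponent vectors contributes only a factor absorbed into $\exp(c\log H/\log\log H)$, reducing the problem to bounding, for each fixed~$\mathbf{k}$, the number of $(s+1)$-tuples in $\cA_d(H)^{s+1}$ (resp.\ $\cA_d^*(H)^{s+1}$) satisfying $\prod_i \alpha_i^{k_i}=1$.

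The main obstacle lies in precisely this last count. A naive approach --- fix $s$ coordinates freely, then note that $\alpha_{s+1}$ is one of at most $|k_{s+1}|w_0(d)$ specific elements --- yields only $H^{d^2 s+o(1)}$ (resp.\ $H^{d(d+1)s+o(1)}$), short of the target by a factor of $H^{d(\lceil(s+1)/2\rceil-1)}$. I expect to recover this saving by decomposing $\mathbf{k}$ into its positive and negative parts $P$ and $N$, rewriting the relation as $\prod_{i\in P}\alpha_i^{k_i}=\prod_{j\in N}\alpha_j^{-k_j}=:\gamma$, and bounding in divisor-theoretic fashion the number of ways an algebraic element~$\gamma$ can be written as an ordered product of elements of $\cA_d(H)$ (resp.\ $\cA_d^*(H)$). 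Each such pairing trades a fully free coordinate for a divisor-style count, saving one factor of $H^d$; combined with the combinatorics of the pairing this should yield exactly the stated saving of $H^{d(\lceil(s+1)/2\rceil-1)}$, after which assembling the bounds proves~\eqref{Mndm} and~\eqref{M*ndm}.
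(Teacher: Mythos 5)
Your skeleton matches the paper's proof of~\eqref{Mndm} closely: reduce to a minimally dependent $(s+1)$-subtuple with all exponents non-zero, bound the exponents (the paper's Lemma~\ref{lem:exponent} gives the stronger polylogarithmic bound $\max_i |k_i|\ll (\log H)^{s}$, though your weaker $\exp(c_0\log H/\log\log H)$ would also be absorbed), split the relation into its positive and negative parts, and extract a saving of $H^d$ from each coordinate on the larger side $I_0$, which has $\card{I_0}\ge\lceil (s+1)/2\rceil$ elements. But the step you leave as an expectation is exactly where the work lies, and a bare divisor bound is not quite the right tool: for $d>1$ the norm $N_{\Q(\nu_i)/\Q}(\nu_i)$ of a big-side coordinate need not \emph{divide} the fixed integer $\prod_{j\in I\setminus I_0}N_{\Q(\nu_j)/\Q}(\nu_j)$; one only knows its prime factors lie among those of that integer. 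The paper therefore bounds the number of admissible norms by a smooth-number count $\psi((2H)^d,c\log H)=\exp(O(\log H/\log\log H))$ via de Bruijn (Lemma~\ref{lem:psixy}), after comparing the prime set with the first $k$ primes through the prime number theorem. Knowing the norm pins down the constant coefficient of the monic minimal polynomial, the $d-1$ remaining coefficients contribute $H^{d(d-1)}$ by Lemma~\ref{lem:H0} (whence the $H^d$ saving per coordinate), and the last big-side coordinate is determined up to $O_d(1)$ root-of-unity ambiguity. Note also that a literal count of ordered factorizations of $\gamma$ into elements of $\cA_d(H)$ is \emph{not} $H^{o(1)}$: units of degree $d$ and height at most $H$ number about $H^{d(d-1)}$, so only the norm, not the element itself, lies in a small set; your phrasing should be read in that corrected sense.

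The genuine gap is in~\eqref{M*ndm}, which does not follow by ``replacing $d^2s$ with $d(d+1)s$''. For algebraic numbers the big-side coordinates have denominators, their valuations can be negative and cancel among themselves, so no per-coordinate smoothness statement holds until you clear denominators and pass to~\eqref{eq:maineq1} with the leading coefficients $a_i$. But the prime-factor constraint on $N_{\Q(\nu_i)/\Q}(a_i\nu_i)$ for $i\in I_0$ involves \emph{all} the $a_j$ with $j\in I_0$, so these must be fixed in advance at a cost of $(2H)^{d\card{I_0}}$; running your accounting then gives an exponent
\begin{equation*}
d\card{I_0}+d(d+1)\bigl(n-\card{I_0}\bigr)+d(d-1)\bigl(\card{I_0}-1\bigr)
= d(d+1)(n-1)-d\bigl(\card{I_0}-1\bigr)+d,
\end{equation*}
i.e.\ a factor $H^d$ \emph{above} the claimed bound. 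The paper recovers this factor by a device your sketch has no analogue of: after arranging (by replacing one coordinate with its inverse, which preserves degree and height for algebraic numbers) that $I\setminus I_0\ne\emptyset$, it fixes all data except one small-side coordinate $\nu_{i_1}$ and observes that $\beta\nu_{i_1}$ is an algebraic integer, forcing the prime factors of the leading coefficient $a_{i_1}$ to divide a fixed quantity; hence $a_{i_1}$ is restricted to $\exp(O(\log H/\log\log H))$ values instead of $\asymp H^d$, so $\nu_{i_1}$ contributes only $H^{d^2}\exp(O(\log H/\log\log H))$ rather than $H^{d(d+1)}$. Without this extra idea your plan proves~\eqref{Mndm} but falls short of~\eqref{M*ndm} by exactly $H^d$.
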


We remark that the estimate~\eqref{Mndm} yields an improvement on the upper bound of $H^{d^2(n-1)}$ and~\eqref{M*ndm} yields an improvement of the upper bound $H^{d(d+1)(n-1)}$ for $s$ at least $2$.

\section{Preliminaries} 

\subsection{Weil height}

We first record a well-known result about the absolute Weil height; see~\cite[Chapter 3]{Lang}.

\begin{lem} 
Let $\alpha$ be a non-zero algebraic number, and let $k$ be an integer. Then
$$
\wH(\alpha^k)=\wH(\alpha)^{|k|}.
$$
\end{lem}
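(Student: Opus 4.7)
The plan is to work with the equivalent place-theoretic expression of the absolute Weil height, namely
$$\wH(\beta)=\prod_v \max\{1,|\beta|_v\}^{d_v/[K:\Q]},$$
where $\beta$ lies in a number field $K$, $v$ runs over all places of $K$ (normalised in the standard way), and $d_v=[K_v:\Q_v]$ is the local degree. This formula is equivalent to the Mahler-measure definition recalled in Section~\ref{sec:con} and is the form used throughout~\cite[Chapter~3]{Lang}; since the right-hand side is independent of the field $K$ containing $\beta$, I may work in any convenient $K$ that contains $\alpha$.

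For $k=0$ the assertion is trivial, since $\alpha^0=1$ and $\wH(1)=1$. For $k>0$, I would apply the multiplicativity $|\alpha^k|_v=|\alpha|_v^k$ together with the elementary identity $\max\{1,t^k\}=\max\{1,t\}^k$ valid for $t\ge 0$ and integer $k\ge 0$; this yields $\wH(\alpha^k)=\wH(\alpha)^k$ factor by factor in the product over $v$.

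The case $k<0$ reduces to the identity $\wH(\alpha^{-1})=\wH(\alpha)$, which I would establish as follows. At each place $v$ one has $\max\{1,|\alpha^{-1}|_v\}=|\alpha|_v^{-1}\max\{|\alpha|_v,1\}$. Taking the weighted product over all $v$ yields
$$\wH(\alpha^{-1})=\wH(\alpha)\prod_v |\alpha|_v^{-d_v/[K:\Q]},$$
and the second factor equals $1$ by the product formula $\prod_v |\alpha|_v^{d_v}=1$, which applies precisely because $\alpha\ne 0$. Combining this with the already-proved case $k>0$ applied to $|k|$ finishes the proof.

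There is no serious obstacle here: the argument is entirely formal once the correct definition of the height is adopted. The only point requiring care is the use of the product formula, which both motivates the hypothesis $\alpha\ne 0$ and cleanly disposes of the negative-exponent case that would otherwise look asymmetric from the Mahler-measure viewpoint.
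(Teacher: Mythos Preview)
Your proof is correct and follows essentially the same approach as the paper: the paper's proof is a one-line appeal to the place-theoretic formula $\wH(\alpha)=\prod_v\max\{1,|\alpha|_v\}$ together with the product formula, and you have simply written out those details (splitting into the cases $k=0$, $k>0$, and $k<0$, and making explicit the role of the product formula in the inversion step $\wH(\alpha^{-1})=\wH(\alpha)$).
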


\begin{proof}
This follows from the product formula and the fact that
$$
\wH(\alpha)=\prod_v\max\{1,|\alpha|_v\},
$$
where the product is taken over all inequivalent valuations $v$ appropriately normalized, 
see for example~\cite[Chapter~3, \S1]{Lang}.
\end{proof}

Next we need a result that allows us to compare the naive height $\wH_0$ and the absolute Weil height $\wH$.

\begin{lem} 
\label{lem:H0}
Let $\alpha$  be an algebraic number of degree $d$. Then
$$
\wH_0(\alpha)\leq \(2\wH(\alpha)\)^d.
$$
\end{lem}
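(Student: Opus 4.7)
The plan is to expand the coefficients of the minimal polynomial in terms of elementary symmetric polynomials in the conjugates $\alpha_1,\ldots,\alpha_d$ and then bound them in terms of the Mahler measure $M(\alpha) = a_d\prod_{i=1}^d\max\{1,|\alpha_i|\} = \wH(\alpha)^d$.

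First, I would note the Vieta relation
\[
a_{d-k} = (-1)^k a_d\, \sigma_k(\alpha_1,\ldots,\alpha_d), \qquad 0 \le k \le d,
\]
where $\sigma_k$ denotes the $k$-th elementary symmetric polynomial. Since $\sigma_k$ is a sum of $\binom{d}{k}$ products, each of which involves only $k$ of the $\alpha_i$, I would use the trivial bound
\[
\bigl|\sigma_k(\alpha_1,\ldots,\alpha_d)\bigr|
\le \binom{d}{k}\prod_{i=1}^{d}\max\{1,|\alpha_i|\}.
\]

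Combining the two displays yields
\[
|a_{d-k}| \le \binom{d}{k}\, a_d\prod_{i=1}^{d}\max\{1,|\alpha_i|\}
= \binom{d}{k}\, \wH(\alpha)^d.
\]
Taking the maximum over $k$ and using $\binom{d}{k} \le \sum_{j=0}^{d}\binom{d}{j} = 2^d$ gives
\[
\wH_0(\alpha) = \max_{0 \le k \le d}|a_{d-k}| \le 2^d\, \wH(\alpha)^d = \bigl(2\wH(\alpha)\bigr)^d,
\]
which is the desired inequality. There is no real obstacle here: the argument is three lines of elementary symmetric function estimates once the definitions of $\wH_0$ and $\wH$ are unpacked and the identity $M(\alpha)=\wH(\alpha)^d$ is invoked.
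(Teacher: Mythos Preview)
Your proof is correct and follows essentially the same approach as the paper's: the paper simply remarks that the coefficients of the minimal polynomial are elementary symmetric polynomials in the roots (citing Mahler), which is precisely what you have written out in detail via the Vieta relations and the bound $|\sigma_k|\le \binom{d}{k}\prod_i\max\{1,|\alpha_i|\}$.
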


\begin{proof}
This follows from noticing that the coefficients of the minimal polynomial $f$ of $\alpha$ can be expressed in terms of elementary symmetric polynomials in the roots of $f$; see for example~\cite[Equation~(6)]{Mahler}.
\end{proof}

For the proofs of Theorems~\ref{thm:MnK} and~\ref{thm:MnK2}, we also need the following result.

\begin{lem} 
\label{lem:Ha}
Let $\alpha$ be an algebraic number of degree $d$, and let $a$ be the leading coefficient of the minimal polynomial of $\alpha$ over the integers. Then
$$
\wH(a\alpha)\leq 2^{d-1}\wH(\alpha)^d.
$$
\end{lem}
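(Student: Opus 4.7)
The plan is to compute $\wH(a\alpha)$ directly from the factored form of the minimal polynomial of $\alpha$, and then invoke Lemma~\ref{lem:H0} to convert the leading coefficient $a$ into a bound in terms of $\wH(\alpha)$ itself.

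First I would observe that $a\alpha$ is an algebraic integer of degree $d$: clearing denominators in $f(\alpha)=0$ via the substitution $x \mapsto x/a$ and multiplying through by $a^{d-1}$ yields a monic polynomial with integer coefficients vanishing at $a\alpha$, and this polynomial must be the minimal polynomial of $a\alpha$ (since $\alpha$ and $a\alpha$ generate the same number field, so they have the same degree). In particular, the conjugates of $a\alpha$ are $a\alpha_1,\ldots,a\alpha_d$, and from the definition of the Weil height
$$
\wH(a\alpha)^d=\prod_{i=1}^{d}\max\{1,|a\alpha_i|\},
$$
with leading coefficient equal to $1$.

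The next step is the pointwise comparison $\max\{1,|a\alpha_i|\}\leq a\max\{1,|\alpha_i|\}$, valid because $a\geq 1$: if $|\alpha_i|\geq 1$ both sides equal $a|\alpha_i|$, and if $|\alpha_i|<1$ the left side is at most $a$ while the right side equals $a$. Taking the product over $i$ and dividing by $a$ (which appears as the leading coefficient in $\wH(\alpha)^d$) gives
$$
\wH(a\alpha)^d\leq a^{d-1}\wH(\alpha)^d,
$$
so $\wH(a\alpha)\leq a^{(d-1)/d}\wH(\alpha)$.

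Finally, since $a$ is (the absolute value of) the leading coefficient of the minimal polynomial, $a\leq \wH_0(\alpha)$, and Lemma~\ref{lem:H0} bounds $\wH_0(\alpha)\leq (2\wH(\alpha))^d$. Substituting yields
$$
\wH(a\alpha)\leq \bigl((2\wH(\alpha))^{d}\bigr)^{(d-1)/d}\wH(\alpha)=2^{d-1}\wH(\alpha)^d,
$$
which is the desired inequality. There is no real obstacle here; the only subtlety is being careful that $a\alpha$ has exactly the stated conjugates and minimal polynomial degree, but that follows at once from the explicit monic polynomial obtained by the substitution $x\mapsto x/a$.
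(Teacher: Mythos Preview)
Your proof is correct and follows essentially the same route as the paper: both arguments establish $\wH(a\alpha)^d\le a^{d-1}\wH(\alpha)^d$ from the factored-height definitions and then invoke Lemma~\ref{lem:H0} to bound $a$ by $(2\wH(\alpha))^d$. Your write-up is slightly more explicit about why $a\alpha$ has degree exactly $d$ and about the pointwise inequality $\max\{1,|a\alpha_i|\}\le a\max\{1,|\alpha_i|\}$, but these are just elaborations of the same computation the paper carries out.
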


\begin{proof}
By definition, we have
$$
\wH(\alpha)=\(a\prod^d_{i=1}\max\{1,|\alpha_i|\}\)^{1/d},
$$
where $\alpha_1,\ldots,\alpha_d$ are the roots of the minimal polynomial of $\alpha$. Then, $a\alpha$ is an algebraic integer, and
$$
\wH(a\alpha)=\(\prod^d_{i=1}\max\{1,|a\alpha_i|\}\)^{1/d}.
$$
Thus
\begin{align*}
\wH(a\alpha)^d & \leq a^d\prod^d_{i=1}\max\{1,|\alpha_i|\}
=a^{d-1}\wH(\alpha)^d,
\end{align*}
which, together with Lemma~\ref{lem:H0}, implies that
$$
\wH(a\alpha)^d\leq \(2\wH(\alpha)\)^{d(d-1)}\wH(\alpha)^d=2^{d(d-1)}\wH(\alpha)^{d^2},
$$
and so
$$
\wH(a\alpha)\leq 2^{d-1}\wH(\alpha)^d
$$
as required.
\end{proof}

\subsection{Multiplicative structure of algebraic  numbers}

Let $K$ be a number field, and let $H$ be a positive real number. We denote by $U_K(H)$ the number of units in the ring of algebraic integers of $K$ of height at most $H$.

\begin{lem} 
\label{lem:units}
 Let $K$ be a number field, and let $r$ be the rank of the unit group as defined in Section~\ref{sec:con}. Then, there exists a positive number $c$, depending on $K$, such that
$$
U_K(H)<c(\log H)^r.
$$
\end{lem}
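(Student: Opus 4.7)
The plan is to invoke Dirichlet's unit theorem, which describes the units of the ring of integers of $K$ as the direct product of the finite cyclic group of the $w$ roots of unity in $K$ and a free abelian group of rank $r$. I would work with the standard logarithmic embedding $L$ sending a unit $u$ to the vector $(d_v\log|u|_v)_v$ indexed by the archimedean places $v$ of $K$, where $d_v$ equals $1$ or $2$ according as $v$ is real or complex. The image $L(\mathcal{O}_K^\times)$ is a lattice $\Lambda$ of rank $r$ contained in the trace-zero hyperplane $\{x\in\R^{r_1+r_2}:\sum_v x_v=0\}$, and the covolume of $\Lambda$ depends only on $K$ (it is essentially the regulator $R$).

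Next I would translate the height condition $\wH(u)\leq H$ into a bound on $L(u)$. Grouping the conjugates of $u$ by places gives
\[
\wH(u)^d \;=\; \prod_v \max\{1,|u|_v\}^{d_v},
\]
so $\wH(u)\leq H$ is equivalent to $\sum_v d_v \log^+|u|_v \leq d\log H$. For a unit, the norm relation $N_{K/\Q}(u)=\pm 1$ gives $\sum_v d_v \log|u|_v=0$, whence $\sum_v d_v \log^-|u|_v=\sum_v d_v \log^+|u|_v$, and therefore every coordinate of $L(u)$ has absolute value at most $2d\log H$.

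Counting units of height at most $H$ is then reduced, via the surjection $\mathcal{O}_K^\times\to\Lambda$ with kernel equal to the group of roots of unity in $K$, to counting lattice points of $\Lambda$ inside a box of side $O(\log H)$ lying in the trace-zero hyperplane. Since $\Lambda$ has rank $r$ and covolume depending only on $K$, a standard lattice-point estimate produces $O((\log H)^r)$ such points. Multiplying by $w$ to account for the roots of unity (each of which has height $1\leq H$) yields $U_K(H)\ll w(\log H)^r$, and absorbing $w$ into the constant $c$ completes the argument.

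This is essentially routine; no step is a serious obstacle. The only item requiring care is the conversion between the multiplicative height condition and the additive logarithmic norm on $L(u)$, which is handled by the displayed identity together with the product formula for units.
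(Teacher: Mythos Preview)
Your argument is correct: the logarithmic embedding together with Dirichlet's unit theorem reduces the count to lattice points in a box of side $O(\log H)$ inside the trace-zero hyperplane, giving $O((\log H)^r)$. The paper does not supply its own proof here; it simply cites \cite[Chapter~3, Theorem~5.2(ii)]{Lang}, whose proof is precisely the standard argument you have outlined. So you have unpacked the cited reference rather than taken a different route. One minor sharpening: from $\sum_v d_v\log^+|u|_v\le d\log H$ and the trace-zero relation you in fact get $|d_v\log|u|_v|\le d\log H$ for each $v$, not $2d\log H$, but this does not affect the conclusion.
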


\begin{proof}
This is~\cite[Part~(ii) of Theorem~5.2 of Chapter~3]{Lang}.
\end{proof}

The next result shows that if algebraic numbers $\alpha_1,\ldots,\alpha_n$ are multiplicatively dependent, then we can find a relation as~\eqref{eq:MultDep}, where the exponents are  not too large. Such a result has  found application in transcendence theory, see for example~\cite{Baker,Matveev,vdPL,Stark}.

\begin{lem} 
\label{lem:exponent}
Let $n\geq 2$, and let $\alpha_1,\ldots,\alpha_n$ be multiplicatively dependent non-zero algebraic numbers of degree at most $d$ and height at most $H$. Then, there is a positive number $c$, which depends only on $n$ and $d$, and there are rational integers $k_1,\ldots,k_n$, not all zero, such that
$$
\alpha^{k_1}_1\cdots\alpha^{k_n}_n=1
$$
and
$$
\max_{1\leq i\leq n}|k_i|<c(\log H)^{n-1}.
$$
\end{lem}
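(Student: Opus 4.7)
I plan to argue via geometry of numbers, following the classical template of~\cite{LvdP,vdPL,Baker,Stark}. Let $K=\Q(\alpha_1,\ldots,\alpha_n)$, of degree $D\le d^n$. If some $\alpha_i$ is a root of unity in $K$, then $\alpha_i^{w}=1$ for $w$ the number of roots of unity in $K$ (bounded in terms of $n$ and $d$) produces an admissible relation, so I may assume no $\alpha_i$ is a root of unity.

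Let $\sigma_1,\ldots,\sigma_D$ be the embeddings of $K$ into $\C$ and let $\mathfrak{p}_1,\ldots,\mathfrak{p}_t$ be the finite primes of $\mathcal{O}_K$ at which at least one $\alpha_i$ fails to be a unit. Define the logarithmic embedding $\Phi\colon\Z^n\to\R^{D+t}$ by
\[
\Phi(\mathbf{k})=\Bigl(\sum_{i=1}^n k_i\log|\sigma_j(\alpha_i)|\Bigr)_{j=1}^D \oplus \Bigl(\sum_{i=1}^n k_i\, v_{\mathfrak{p}_j}(\alpha_i)\log N\mathfrak{p}_j\Bigr)_{j=1}^t.
\]
The standard identity $D\log \wH(\alpha)=\sum_v[K_v:\Q_v]\log^+|\alpha|_v$ combined with the product formula shows that each column $\Phi(\mathbf{e}_i)$ has Euclidean norm $O(\log H)$, with implied constant depending only on $n$ and $d$. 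Since $\ker\Phi=\{\mathbf{k}:\prod_i\alpha_i^{k_i}\text{ is a root of unity in }K\}$, we have $w\cdot\ker\Phi\subseteq\Lambda$, where $\Lambda$ denotes the true relation lattice. Thus it is enough to produce a non-zero vector of $\ker\Phi$ with sup-norm at most a constant multiple of $(\log H)^{n-1}$.

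Let $s=\mathrm{rank}\,\Phi(\Z^n)$; multiplicative dependence of $(\alpha_1,\ldots,\alpha_n)$ forces $s\le n-1$, and hence $\ker\Phi$ has rank $n-s\ge 1$. The sublattice $\ker\Phi$ is saturated in $\Z^n$ (as $\R^{D+t}$ is torsion-free), so the Binet--Cauchy formula applied to any $s\times n$ submatrix of $\Phi$ formed by $s$ linearly independent rows yields
\[
\mathrm{covol}(\ker\Phi)^2=\sum_{|J|=s}\bigl|\det(\Phi_J)\bigr|^2,
\]
with $J$ ranging over $s$-subsets of the columns. Since each column has Euclidean norm $O(\log H)$, Hadamard's inequality gives $\mathrm{covol}(\ker\Phi)\le C(\log H)^s$.

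Finally, Minkowski's first theorem applied in sup-norm to the $(n-s)$-dimensional lattice $\ker\Phi$ produces a non-zero $\mathbf{k}_0$ with $\|\mathbf{k}_0\|_\infty\le c_0\,\mathrm{covol}(\ker\Phi)^{1/(n-s)}\le c_0(\log H)^{s/(n-s)}$. The exponent $s/(n-s)$ attains its maximum $n-1$ precisely when $s=n-1$, which is the worst case. Multiplying $\mathbf{k}_0$ by $w$ delivers the desired relation vector in $\Lambda$. The most delicate step in filling in this sketch is the uniform bound $\|\Phi(\mathbf{e}_i)\|=O(\log H)$, which requires simultaneous control of the archimedean and non-archimedean contributions to the absolute Weil height; once this is settled, the remainder is a routine application of Minkowski.
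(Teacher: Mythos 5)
The paper itself does not reprove this lemma: it simply quotes~\cite[Theorem~1]{vdPL}, with the explicit constant taken from~\cite[Corollary~3.2]{LM}. Your attempt is therefore a from-scratch argument, and it contains a genuine gap at the covolume step. The identity $\mathrm{covol}(\ker\Phi)^2=\sum_{|J|=s}\bigl|\det(\Phi_J)\bigr|^2$ is a fact about \emph{integer} matrices, and even there it carries the index of the image: for an integer $s\times n$ matrix $A$ of rank $s$ one has $\det(AA^{T})=\mathrm{covol}(\ker_{\Z}A)^{2}\,[\Z^{s}:A\Z^{n}]^{2}$. Your $\Phi$ has real (logarithmic) entries, its image in $\R^{D+t}$ is in general dense rather than discrete, and no such identity or inequality survives. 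Concretely, take $n=2$ and the single row $\Phi=(1,\,1+1/N)$: the integer kernel is generated by $(N+1,-N)$, so its covolume (and its shortest vector) is of size about $N\sqrt{2}$, which is unbounded, while the Cauchy--Binet sum $\sum_J|\det\Phi_J|^2$ stays bounded and the column norms are $O(1)$. So the covolume of the kernel of a \emph{real} matrix is not controlled by its minors, your bound $\mathrm{covol}(\ker\Phi)\le C(\log H)^s$ is unjustified, and the Minkowski step has nothing to stand on. (A smaller defect in the same step: your $s$ is the rank of $\Phi(\Z^n)$ as an abelian group, which can exceed the rank of $\Phi$ as a real matrix --- e.g.\ the row $(1,\pi)$ has group-rank $2$ --- in which case ``$s$ linearly independent rows'' do not exist.)

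What is missing is arithmetic, not linear algebra: the counterexample shows that a nonzero kernel of a real log-matrix whose columns have norm $O(\log H)$ may contain only very long vectors, so some genuinely arithmetic property of the $\alpha_i$ must enter. In the actual proofs (Loxton--van der Poorten~\cite{LvdP,vdPL}; the explicit version in~\cite{LM}, attributed to Yu) one works with the height seminorm $\mathbf{k}\mapsto h\bigl(\alpha_1^{k_1}\cdots\alpha_n^{k_n}\bigr)$, where $h=\log\wH$, applies geometry of numbers (successive minima) with respect to it, and crucially invokes a lower bound $h(\alpha)\ge c(d)>0$ for every non-torsion $\alpha$ of degree at most $d$ (Northcott already suffices since the degree is bounded; Dobrowolski-type bounds give more); this is what produces relation vectors with $|k_i|\ll \prod_{j\ne i}h(\alpha_j)\le c(\log H)^{n-1}$. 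Your sketch never uses any lower bound on heights. Ironically, the step you flag as the most delicate --- the column bound $\|\Phi(\mathbf{e}_i)\|=O(\log H)$ --- is fine: despite $t$ being unbounded, the height identity and the product formula bound each column in $\ell^1$ by $O(\log H)$ with a constant depending only on $n$ and $d$, and $\ell^2\le\ell^1$. The step that fails as written is the covolume bound, and it cannot be repaired without importing the arithmetic input above.
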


\begin{proof}
This follows from~\cite[Theorem~1]{vdPL}. For an explicit constant $c$, we refer to~\cite[Corollary~3.2]{LM}.
\end{proof}

Let $x$ and $y$ be positive real numbers with $y$ larger than 2, and let $\psi(x,y)$ denote the number of positive integers not exceeding $x$ which contain no prime factors greater than $y$. Put
$$
Z=\(\log\(1+\frac{y}{\log x}\)\)\frac{\log x}{\log y}+\(\log\(1+\frac{\log x}{y}\)\)\frac{y}{\log y}
$$
and
$$
u=(\log x)/(\log y).
$$

\begin{lem} 
\label{lem:psixy}
For $2<y\leq x$, we have
\begin{align*}
\psi(x&,y)\\
&=\exp\( Z \(1+O((\log y)^{-1})+O((\log\log x)^{-1})+O((u+1)^{-1}) \) \).
\end{align*}
\end{lem}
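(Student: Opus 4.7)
The plan is to cite the classical uniform asymptotic formula for the smooth-number counting function due to de Bruijn, rather than to reprove it from scratch. The displayed formula is precisely de Bruijn's form of $\log\psi(x,y)$ uniformly in the wide range $2 < y \leq x$; the function $Z$ is engineered so that a single statement handles every subregime at once. In particular, when $u$ is bounded (so that $y$ is a fixed power of $x$), the first summand of $Z$ dominates and yields the Dickman-type main term $-u\log u + O(u)$ after rearrangement, whereas when $y$ is small compared to $\log x$ the second summand $(y/\log y)\log(1 + (\log x)/y)$ takes over and reflects the fact that $\psi(x,y)$ is then close to $x$ itself. Verifying these two degenerations matches the formula to its expected shape in the standard regimes and thus confirms one has the correct statement to cite.

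If a self-contained proof were required, the natural route is the saddle-point method applied to the Perron integral involving the partial zeta function $\zeta(s,y) = \prod_{p \leq y}(1 - p^{-s})^{-1}$. One shifts the contour to pass through the saddle $\sigma = \sigma(x,y)$ determined implicitly by $-\zeta'(\sigma,y)/\zeta(\sigma,y) = \log x$, expands the integrand quadratically around this point, and estimates the resulting Gaussian integral. The three error contributions $O((\log y)^{-1})$, $O((\log\log x)^{-1})$ and $O((u+1)^{-1})$ correspond respectively to the local quadratic approximation error at the saddle, boundary corrections when $y$ is close to $2$ or to $x$, and tail corrections when $u$ is small.

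The main obstacle, were one forced to reproduce the argument rather than cite, is obtaining the asymptotic uniformly across the full range rather than only in the classical Dickman regime $\log y \geq (\log x)^{1/2}$ or in the ``near-$x$'' regime separately; gluing the two regimes requires a careful analysis of how $\sigma(x,y)$ behaves as $u$ varies from bounded to unbounded. This uniformity is exactly the substance of de Bruijn's theorem, and for the present paper it suffices to invoke it as a black box, with the proof amounting to one line quoting the reference.
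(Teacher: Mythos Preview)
Your proposal is correct and takes essentially the same approach as the paper: both simply cite de~Bruijn's theorem (\cite[Theorem~1]{dB}) as a black box. The additional heuristic discussion you give about the saddle-point method and the degenerations of $Z$ is fine as motivation but goes beyond what the paper needs or provides.
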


\begin{proof}
This is~\cite[Theorem~1]{dB}.
\end{proof}

\subsection{Counting special algebraic numbers} 

In this section, we count two special kinds of algebraic numbers.

\begin{lem} 
\label{lem:coeff}
Let $K$ be a number field of degree $d$, and let $u$ and $v$ be non-zero integers with $u>0$. Then, there is a positive number $c$, which depends on $K$, such that the number of elements $\alpha$ in $K$ of height at most $H$, whose minimal polynomial has leading coefficient $u$ and constant coefficient $v$, is at most
$$
\exp(c\log H/\log\log H).
$$
\end{lem}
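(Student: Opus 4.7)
The plan is to reduce the count to one of algebraic integers in $O_K$ of fixed nonzero norm and bounded height. Writing the minimal polynomial of $\alpha \in K$ as $f(x) = u x^m + a_{m-1} x^{m-1} + \cdots + a_1 x + v$ with $m = \deg\alpha$ dividing $d$, the crucial step is to consider $\beta = u\alpha$. Multiplying $f(\alpha)=0$ by $u^{m-1}$ shows that $\beta$ satisfies the monic polynomial $y^m + a_{m-1} y^{m-1} + u a_{m-2} y^{m-2} + \cdots + u^{m-1} v \in \Z[y]$, so $\beta \in O_K$. Since $\mathrm{Nm}_{\Q(\alpha)/\Q}(\alpha) = (-1)^m v/u$, the $K/\Q$-norm of $\beta$ has absolute value $N_0 := u^{d - d/m} |v|^{d/m}$, a fixed nonzero integer determined by $u$, $v$, $m$ and $d$. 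By Lemma~\ref{lem:Ha} we also have $\wH(\beta) \le 2^{d-1} \wH(\alpha)^d \le 2^{d-1} H^d$, and since $u$ is fixed the map $\alpha \mapsto \beta$ is injective. Hence it suffices, for each of the $O(1)$ divisors $m$ of $d$, to bound the number of $\beta \in O_K$ with $|\mathrm{Nm}_{K/\Q}(\beta)| = N_0$ and $\wH(\beta) \le 2^{d-1} H^d$.

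This count splits as the number of principal ideals of $O_K$ of norm $N_0$ times the maximum number of associates of any fixed generator with height at most $2^{d-1} H^d$. The number of integral ideals of $O_K$ of norm exactly $N_0$ is bounded by the $d$-fold divisor function $\tau_d(N_0)$, which by Wigert's theorem is $\exp(O(\log N_0 / \log \log N_0))$. Lemma~\ref{lem:H0} combined with $\wH(\alpha) \le H$ forces $u, |v| \le (2H)^d$, so $\log N_0 \ll \log H$ and this factor is at most $\exp(O(\log H/\log\log H))$. For the second factor, having fixed some generator $\beta_0$ with $\wH(\beta_0) \le 2^{d-1} H^d$, every other generator of the same ideal within the same height bound is of the form $\epsilon \beta_0$ for a unit $\epsilon \in O_K^\times$ with $\wH(\epsilon) \le \wH(\beta)\wH(\beta_0) \le 2^{2(d-1)} H^{2d}$; by Lemma~\ref{lem:units}, the number of such units is $O((\log H)^r)$, which is absorbed into $\exp(O(\log H/\log\log H))$.

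I expect no serious obstacle here: the trick $\alpha \mapsto u\alpha$ is what converts the prescribed leading and constant coefficients of the minimal polynomial into a single piece of information, namely a fixed norm, after which the divisor-function estimate and Lemma~\ref{lem:units} do all the work. The only care needed is in tracking heights and checking that $N_0$ and the height bound on $\epsilon$ remain polynomial in $H$, so that the final bound is genuinely $\exp(c \log H/\log\log H)$ with a constant $c$ depending only on $K$.
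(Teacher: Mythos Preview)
Your proof is correct and follows essentially the same route as the paper's: pass to $\beta = u\alpha \in O_K$, which has fixed nonzero norm and height $\le 2^{d-1}H^d$, then bound the number of ideals of that norm via the divisor function (Wigert) and the number of generators of each via Lemma~\ref{lem:units}. The only difference is that you track the possibility $\deg\alpha = m < d$ explicitly, whereas the paper tacitly writes $N_{K/\Q}(\alpha) = (-1)^d v/u$ as if $m = d$; your version is slightly more careful but the argument is the same.
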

\begin{proof}
Let $c_1,c_2,\ldots$ denote positive numbers depending on $K$. 
Let $N_{K/\Q}$ be the norm function from $K$ to $\Q$. 
Suppose that $\alpha$ is an element of $K$ of height at most $H$ whose minimal polynomial has leading coefficient $u$ and constant coefficient $v$. Then, we see that $u\alpha$ is an algebraic integer in $K$, and 
$$
N_{K/\Q}(\alpha) = (-1)^d v/u \mand N_{K/\Q}(u\alpha) = (-1)^d u^{d-1}v. 
$$
By Lemma~\ref{lem:Ha}, we further have $\wH(u\alpha) \le 2^{d-1}H^d$. 
Note that $u$ is fixed, so the number of such $\alpha$ does not exceed the number of algebraic integers $\beta \in K$ of height at most $2^{d-1}H^d$ and satisfying 
\begin{equation}
\label{eq:norm eq}
N_{K/\Q}(\beta) = (-1)^d u^{d-1}v. 
\end{equation}

We say that two algebraic integers $\beta_1$ and $\beta_2$ in $K$ are equivalent if the principal integral ideals $\langle \beta_1 \rangle$ and $\langle \beta_2 \rangle$  are equal.  We note that, 
using~\cite[Chapter~3, 
Equation~(7.8)]{BS}, the number $E$ of equivalence classes of solutions of~\eqref{eq:norm eq} is at most $\tau(|u^{d-1}v|)^d$, where, for any positive integer $k$, $\tau(k)$ denotes the number of positive integers which divide $k$.
By Wigert's Theorem, see~\cite[Theorem~317]{Hardy},
\begin{equation}
\label{eq:E1}
E<\exp \( c_1\log (3|uv|)/\log\log (3|uv|) \).
\end{equation}
Further by Lemma~\ref{lem:H0} $u$ and $v$ are at most $(2H)^d$ in absolute value, hence
\begin{equation} \label{eq:E2}
E<\exp(c_{2}\log H/\log\log H).
\end{equation}

Besides, if two solutions $\beta_1$ and $\beta_2$ of~\eqref{eq:norm eq} are equivalent, then $\beta_1/\beta_2$ is a unit $\eta$ in the ring of algebraic integers of $K$. But
$$
\wH(\eta) \leq \wH(\beta_1)\wH((\beta_2)^{-1}) \leq 2^{2(d-1)}H^{2d}.
$$
By Lemma~\ref{lem:units} the number of such units is at most
\begin{equation}\label{eq:U}
U_K(2^{2(d-1)}H^{2d}) \leq c_{3}( \log H)^r.
\end{equation}
Our result now follows from~\eqref{eq:E2} and~\eqref{eq:U}.
\end{proof}

We remark that if we set $u=1$, then Lemma~\ref{lem:coeff} gives an upper bound for the number of algebraic integers in $K$ of norm $\pm v$ and of height at most $H$.

Given integer $d\ge 1$, let $\cC^*_d(H)$ be the set of algebraic numbers $\alpha$ of degree $d$ and height at 
most $H$ such that $\alpha\eta$ is also of degree $d$ for
 some root of unity $\eta\ne \pm 1$, and let $\cC_d(H)$ be the set of algebraic integers contained 
 in $\cC^*_d(H)$. Here, we want to estimate the sizes of $\cC_d(H)$ and $\cC^*_d(H)$. 

For this we need some preparations.  
Given a polynomial $f=a_dX^d + \cdots + a_1X +a_0\in \Q[X]$ of degree $d$, we call it \textit{degenerate} if it has two distinct roots whose quotient is a root of unity. Besides, we define its \textit{height} as 
$$
\wH(f) = \max\{|a_d|,\ldots,|a_1|,|a_0|\}, 
$$
and we denote by $G_f$ the \textit{Galois group} of the splitting field of $f$ over $\Q$. 
Let $S_d$ be the full symmetric group of $d$ symbols. 

Define 
$$
\cE_d(H) =\{\textrm{monic $f\in \Z[X]$ of degree $d$: $\wH(f)\le H$ and $G_f\ne S_d$} \}
$$
and 
$$
\cE^*_d(H) =\{\textrm{$f\in \Z[X]$ of degree $d$: $\wH(f)\le H$ and $G_f\ne S_d$} \}. 
$$
The study of the sizes of $\cE_d(H)$ and $\cE^*_d(H)$ was 
initiated by van der Waerden~\cite{Waerden}. 
 Here, we recall a recent result due to Dietmann~\cite[Theorem~1]{Dietmann2013}:   
\begin{equation} \label{eq:Galois1}
|\cE_d(H) | \ll H^{d-1/2}. 
\end{equation}
Besides, by a result of Cohen~\cite[Theorem~1]{Cohen} (taking $K=\Q, s=n+1$ and $r=1$ there), we directly have 
\begin{equation} \label{eq:Galois2}
|\cE^*_d(H) | \ll H^{d+1/2}\log H. 
\end{equation}

We also put
$$
\cF_d(H) = \{\textrm{monic $f\in \Z[X]$ of degree $d:$\, $\wH(f)\le H$, $f$ is degenerate} \}
$$
and 
$$
\cF^*_d(H) = \{\textrm{$f\in \Z[X]$ of degree $d:$\, $\wH(f)\le H$, $f$ is degenerate} \}. 
$$
Applying~\cite[Theorems~1 and~4]{DS}, we have 
\begin{equation}
\label{eq:degenerate}
|\cF_d(H)| \ll H^{d-1} \mand |\cF^*_d(H)| \ll H^d. 
\end{equation}

We are now ready to prove the following lemma. 

\begin{lem} \label{lem:special}
We have: 
\begin{itemize}
\item[(i)] for any integer $d\ge 1$, 
$$
|\cC_d(H)| \ll H^{d(d-1/2)} \quad \text{and} \quad  |\cC^*_d(H)|\ll H^{d(d+1/2)}\log H;
$$ 
\item[(ii)]  for $d=2$ or for $d$  odd,  
$$
|\cC_d(H)| \ll H^{d(d-1)} \mand |\cC^*_d(H)|\ll H^{d^2}. 
$$ 
\end{itemize}
\end{lem}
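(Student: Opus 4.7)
The plan is to establish a single structural claim and then combine the cited estimates. \textbf{Claim:} if $\alpha \in \cC^*_d(H)$ has non-degenerate minimal polynomial $f$, then $\Q(\alpha)$ contains a nontrivial root of unity different from $\pm 1$. Granting this, the bounds in both parts follow by combining~\eqref{eq:Galois1}--\eqref{eq:degenerate} with Schanuel's estimate~\eqref{B*K} and its counterpart~\eqref{BK0} applied to $\Q(i)$ and $\Q(\omega)$.

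To prove the claim, pick a root of unity $\eta \neq \pm 1$ of order $n$ with $[\Q(\alpha\eta):\Q] = d$; let $K = \Q(\alpha)$, let $E$ be the splitting field of $f$, set $G = \Gal(E/\Q)$ and $E' = E(\eta)$, so that $\Gal(E'/\Q)$ is the fiber product of $G$ with $(\Z/n)^*$ over $\Gal(E \cap \Q(\eta)/\Q)$. A pair $(\sigma,\tau) \in \Gal(E'/\Q)$ fixes $\alpha\eta$ iff $\sigma(\alpha)/\alpha = \eta/\tau(\eta)$, which is a root of unity, so non-degeneracy of $f$ forces $\sigma(\alpha) = \alpha$ and $\tau = 1$. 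Applying orbit-stabilizer to the orbit of $\alpha\eta$, of size $d$, then yields $[E \cap \Q(\eta):\Q] = \varphi(n)$ (so $\Q(\eta) \subseteq E$) together with $\Gal(E/K) \subseteq \Gal(E/\Q(\eta))$; by the Galois correspondence the latter is equivalent to $\Q(\eta) \subseteq K$, which proves the claim.

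For the deductions, $\wH(f) \leq (2H)^d$ by Lemma~\ref{lem:H0}. If $f$ is non-degenerate and $d \geq 3$, then by the claim $K$ has a nontrivial abelian Galois subfield $\Q(\eta)$, corresponding to a normal subgroup $N \triangleleft G_f$ containing the point stabilizer $\mathrm{Stab}_{G_f}(\alpha)$; if $G_f = S_d$, the normal closure of $\mathrm{Stab}_{G_f}(\alpha) = S_{d-1}$ in $S_d$ is all of $S_d$, forcing $N = S_d$ and $\Q(\eta) = \Q$, a contradiction with $\eta \neq \pm 1$. Hence $G_f \neq S_d$, i.e.\ $f \in \cE^*_d((2H)^d)$, and we obtain the dichotomy $f \in \cF^*_d((2H)^d) \cup \cE^*_d((2H)^d)$; part~(i) follows upon noting that each polynomial contributes at most $d$ roots. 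For part~(ii) with $d$ odd and $\geq 3$, the claim forces $\varphi(n) \mid d$, but $\varphi(n) \geq 2$ is even while $d$ is odd, so $f$ must be degenerate and only $\cF_d, \cF^*_d$ contribute. For $d = 2$, $\cE_2 = \emptyset$ since every irreducible quadratic has Galois group $S_2$; the claim forces $K = \Q(\eta) \in \{\Q(i),\Q(\omega)\}$ in the non-degenerate case, and the non-degenerate contribution is bounded by $|\cB^*_{\Q(i)}(H)| + |\cB^*_{\Q(\omega)}(H)| \ll H^4$ via~\eqref{B*K} (respectively $\ll H^2$ via~\eqref{BK0} for algebraic integers), which combined with the degenerate contribution gives the required bound. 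The case $d = 1$ is vacuous.

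The main obstacle is the orbit-stabilizer bookkeeping supporting the claim: one must correctly realise $\Gal(E'/\Q)$ as the fiber product, use non-degeneracy to cut the stabilizer of $\alpha\eta$ down to $\{(\sigma,1) : \sigma \in \mathrm{Stab}_G(\alpha)\} \cap \Gal(E'/\Q)$, and extract the inclusion $\Q(\eta) \subseteq K$ from the resulting index equality. The subsequent normal-closure computation in $S_d$ and the ad hoc treatment of $d = 2$ are then routine.
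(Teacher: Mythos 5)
Your proof is correct, and although it follows the paper's broad strategy --- pass to the minimal polynomial $f$, note $\wH(f)\le (2H)^d$ by Lemma~\ref{lem:H0}, and split according to whether $f$ is degenerate, citing~\eqref{eq:Galois1}, \eqref{eq:Galois2} and~\eqref{eq:degenerate} --- your execution of the key step is genuinely different, and at one point more careful than the paper's. The paper proves (the contrapositive of) your Claim by partitioning $\Gal(\Q(\eta,\alpha_1,\ldots,\alpha_d)/\Q)$ into the sets $G_i=\{\phi:\phi(\alpha)=\alpha_i\}$ and pigeonholing among the $d$ conjugates of $\alpha\eta$: if $\eta\notin\Q(\alpha)$ then $|G_1\alpha\eta|\ge 2$ forces two sets $G_i\alpha\eta$, $G_j\alpha\eta$ to meet, so $f$ is degenerate; the case $\eta\in\Q(\alpha)$ is treated separately via the implication $\bigcap_i\Q(\alpha_i)\ne\Q\Rightarrow G_f\ne S_d$. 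Your fiber-product/orbit--stabilizer computation delivers both conclusions in one stroke with the sharper output $\Q(\eta)\subseteq\Q(\alpha)$, and your normal-closure argument (the normal closure of $S_{d-1}$ in $S_d$ is $S_d$) is a correct substitute for the paper's intersection argument; both require $d\ge 3$. More significantly, your separate treatment of $d=2$ repairs an actual gap in the paper's proof: the paper asserts that for $d=2$ the case $\eta\in\Q(\alpha)$ cannot occur (``since $f$ is irreducible''), hence that every relevant $f$ is degenerate, but this is false --- take $\alpha=1+2i$ and $\eta=i$, so that $\alpha\eta=-2+i$ has degree $2$, while $f=x^2-2x+5$ is non-degenerate (the ratio of its roots is $(-3+4i)/5$, which is not an algebraic integer, hence not a root of unity) and $G_f=S_2$; the implication $\bigcap_i\Q(\alpha_i)\ne\Q\Rightarrow G_f\ne S_d$ simply fails at $d=2$, where the two roots generate the same field. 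Your observation that such non-degenerate $\alpha$ satisfy $\Q(\alpha)=\Q(\eta)\in\{\Q(i),\Q(\zeta_3)\}$, combined with~\eqref{B*K} and~\eqref{BK0}, bounds this contribution by $O(H^4)$ (resp.\ $O(H^2)$ for algebraic integers), the same order as the degenerate contribution, so the stated $d=2$ bounds survive --- your route is the correct one here.

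Two trivial touch-ups. First, your parenthetical ``$\cE_2=\emptyset$'' is false as stated, since reducible quadratics have $G_f\ne S_2$ and do lie in $\cE_2$; what you need, and in fact use, is that no \emph{irreducible} quadratic has $G_f\ne S_2$, which suffices because $f$ is a minimal polynomial. Second, since your dichotomy $f\in\cF^*_d\cup\cE^*_d$ is established only for $d\ge 3$, you should say explicitly that part~(i) at $d=2$ follows a fortiori from your stronger part~(ii) bounds; this is immediate but currently left implicit.
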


\begin{proof}
Pick an arbitrary element $\alpha \in \cC_d(H)$. We let $f$ be its minimal polynomial over $\Z$, and let the $d$ roots of $f$
be $\alpha_1,\ldots,\alpha_d$ with $\alpha_1=\alpha$. Since $\alpha$ is of height at most $H$, 
by Lemma~\ref{lem:H0} we have 
$$
\wH(f)\le (2H)^d.
$$ 
 
By definition, there is a root of unity $\eta \ne \pm 1$ such that $\alpha\eta$ is also of degree $d$. 
If $\eta \in \Q(\alpha)$, then under an isomorphism sending $\alpha$ to $\alpha_i$, $\eta$ is mapped to one of its conjugates $\eta_i$ in $\Q(\alpha_i)$, which implies that $\eta \in \Q(\alpha_i)$ for any $1\le i \le d$. 
Indeed, the image $\eta_i$ of $\eta$ in $\Q(\alpha_i)$ multiplicatively generates the same group 
as $\eta$, and thus $\eta$ is a power of $\eta_i$, so $\eta \in \Q(\alpha_i)$. 
Hence,  $\bigcap_{i=1}^{d}\Q(\alpha_i) \ne \Q$, then we must have $G_f \ne S_d$,
that is, 
\begin{equation}
\label{eq:f in E}
f \in  \cE_d((2H)^d).
\end{equation}
Furthermore, since $f$ is irreducible, in this case $d \ne 2$. 
We also note that since $\eta$ is of even degree  $\varphi(k)$, where $k > 2$ is the smallest 
positive integer with $\eta^k = 1$, 
this case does not happen when $d$ is odd. 

Now, we assume that $\eta \not\in \Q(\alpha)$. 
Let $K=\Q(\eta,\alpha_1,\ldots,\alpha_d)$, and let $G$ be the Galois group $\Gal(K/\Q)$, where $K$ is indeed a Galois extension over $\Q$. We construct a disjoint union $G= \bigcup_{i=1}^{d}G_i$, where 
$$
G_i = \{\phi\in G: \, \phi(\alpha)=\alpha_i \}. 
$$
So, for each $1\le i \le d$ 
$$
G_i \alpha\eta = \{\phi(\alpha\eta):\, \phi \in G_i \}
=\{\alpha_i\phi(\eta):\, \phi \in G_i \}.
$$ 
Since  $\alpha\eta$ is of degree $d$, we have 
\begin{equation}
\label{eq: Union Card}
  \left | \bigcup_{i=1}^{d}G_i \alpha\eta \right| = d.
\end{equation}
Note that $\alpha_1=\alpha$, then $G_1 = \Gal(K/\Q(\alpha))$. Since $\eta \not\in \Q(\alpha)$, 
there exist two morphisms $\phi_1,\phi_2 \in G_1$ such that $\phi_1(\eta) \ne \phi_2(\eta)$. 
That is, $|G_1 \alpha\eta| \ge 2$. 
Trivially, $|G_i \alpha\eta| \ge 1$ for $2\le i \le d$. 
We now see from~\eqref{eq: Union Card} that there are two distinct indices $i, j$ such that 
$G_i\alpha\eta \cap G_j\alpha\eta \ne \emptyset$, 
which implies that $\alpha_i/\alpha_j$ is a root of unity and thus $f$ is degenerate, that is, 
\begin{equation}
\label{eq:f in F}
f \in  \cF_d((2H)^d).
\end{equation}

Hence, if $\alpha \in \cC_d(H)$, then  combing~\eqref{eq:f in E} and~\eqref{eq:f in F} 
with~\eqref{eq:Galois1} and~\eqref{eq:degenerate}, respectively, we derive the first inequality in~(i).
 If $d=2$ or $d$ is odd, by the above discussion we always have~\eqref{eq:f in F}, and thus the first inequality in~(ii) follows from~\eqref{eq:degenerate}. 
Similar arguments also apply to estimate $|\cC^*_d(H)|$ by using~\eqref{eq:Galois2} and~\eqref{eq:degenerate}. 
\end{proof}

\section{Proofs of Propositions~\ref{prop:MnKm} and~\ref{prop:Mndm}} 

\subsection{Proof of Proposition~\ref{prop:MnKm}} 

Let $c_3,c_4,\ldots$ denote positive numbers  depending on $n$ and $K$. 
Let 
$\pmb{\nu}=(\nu_1,\ldots,\nu_n)$ be a multiplicatively dependent vector of multiplicative rank $s$ whose coordinates are from $K$ and have height at most $H$. 
Set $m = s+1$. Then, there are $m$ distinct integers $j_1,\ldots,j_m$ from $\{1,\ldots,n\}$ for which $\nu_{j_1},\ldots,\nu_{j_m}$ are multiplicatively dependent and there are non-zero integers $k_{j_1},\ldots,k_{j_m}$ for which
\begin{equation} \label{eq:mult}
\nu^{k_{j_1}}_{j_1}\cdots \nu^{k_{j_m}}_{j_m}=1,
\end{equation}
and further by Lemma~\ref{lem:exponent}, we can assume that
\begin{equation} \label{eq:expo}
\max\{|k_{j_1}|,\ldots,|k_{j_m}|\}<c_3(\log H)^{m-1}.
\end{equation}
Let $P$ be the set of indices $i$ for which $k_i$ is positive, and let $N$ be the set of indices $i$ for which $k_i$ is negative. Then
\begin{equation} \label{eq:sep}
\prod_{i\in P}\nu^{k_i}_{i}=\prod_{i\in N}\nu^{-k_i}_{i}.
\end{equation}
Plainly, either $\card{P}$ or $\card{N}$ is at least $\lceil m/2\rceil$. 

Let $I=\{j_1,\ldots,j_m\}$, and let $I_0$ be the subset of $I$ consisting of the indices $i$ for which $k_i$ is positive if 
$\card{P}\geq\lceil m/2 \rceil$, and otherwise let $I_0$ be the subset of $I$ consisting of the indices $i$ for which $k_i$ is negative. 
Note that 
\begin{equation} \label{eq:I0}
\card{ I_0}\geq\left\lceil\frac{m}{2}\right\rceil.
\end{equation}
It follows from~\eqref{eq:sep} that
\begin{equation} \label{eq:maineq}
\prod_{i\in I_0}\nu^{|k_i|}_i=\prod_{i\in I\backslash I_0}\nu^{|k_i|}_i.
\end{equation}

For each coordinate $\nu_i$, $i\in I$, let $a_i$ be the leading coefficient of the minimal polynomial of $\nu_i$ over the integers. Note that $a_i\nu_i$ is an algebraic integer, and we can rewrite~\eqref{eq:maineq} as
\begin{equation} \label{eq:maineq1}
\prod_{i\in I_0}(a_i\nu_i)^{|k_i|}=\prod_{i\in I_0}a^{|k_i|}_i\prod_{i\in I\backslash I_0}\nu_i^{|k_i|}.
\end{equation}

We  first establish~\eqref{MnKm}. Accordingly, we fix non-zero algebraic integers $\nu_i \in \cB_K(H)$ for $i$ from $\{1,\ldots,n\}\backslash I_0$ and estimate the number of solutions of~\eqref{eq:maineq} in algebraic integers $\nu_i$, $i\in I_0$, from $\cB_K(H)$. Observe that the number of cases when we consider an equation of the form~\eqref{eq:maineq} is, by~\eqref{eq:expo}, at most
$$
\binom{n}{m}\(2c_3(\log H)^{(m-1)}\)^m B_K(H)^{n-\card{ I_0}},
$$
and, by~\eqref{BK} and~\eqref{eq:I0}, is at most
\begin{equation} \label{eq:cases1}
c_4H^{d(n-\lceil m/2 \rceil)}(\log H)^{c_5}.
\end{equation}

Let $q_1,\ldots,q_t$ be the primes which divide
$$
\prod_{i\in I\backslash I_0}N_{K/\Q}(\nu_i),
$$
where $N_{K/\Q}$ is the norm  from $K$ to $\Q$.
Since the height of $\nu_i$ is at most $H$, it follows from Lemma~\ref{lem:H0} that
\begin{equation} \label{eq:normv}
|N_{K/\Q}(\nu_i)|\leq (2H)^d, \quad i=1,2,\ldots,n,
\end{equation}
and since $\card {I\backslash I_0}\leq n$, we see that
\begin{equation} \label{eq:normv2}
\left|\prod_{i\in I\backslash I_0}N_{K/\Q}(\nu_i)\right|\leq (2H)^{dn}.
\end{equation}
Let $p_1,\ldots,p_k$ be the first $k$ primes, where $k$ satisfies
$$
p_1\cdots p_k\leq\left|\prod_{i\in I\backslash I_0}N_{K/\Q}(\nu_i)\right|<p_1\cdots p_{k+1}.
$$
Let $T$ denote the number of positive integers up to $(2H)^d$ which are composed only of primes from $\{q_1,\ldots,q_t\}$. We see that $T$ is bounded from above by the number of positive integers up to $(2H)^d$ which are composed of primes from $\{p_1,\ldots,p_k\}$. By~\eqref{eq:normv2}, we obtain
$$
\sum_{\textrm{prime $p\le p_k$}} \log p \ll \log H,
$$
which, combined with the prime number theorem, yields
$$
p_k<c_6\log H.
$$
Therefore we have
$$
T\leq \psi\((2H)^d, c_6\log H \),
$$
and thus by Lemma~\ref{lem:psixy},
\begin{equation} \label{eq:T1}
T<\exp(c_7\log H/\log\log H).
\end{equation}
It follows that if $(\nu_i,i\in I_0)$ is a solution of~\eqref{eq:maineq}, then $|N_{K/\Q}(\nu_i)|$ is composed only of primes from $\{q_1,\ldots,q_t\}$, and so $N_{K/\Q}(\nu_i)$ is one of at most $2T$ integers of absolute value at most $(2H)^d$. Let $a$ be one of those integers.

By Lemma~\ref{lem:coeff}, the number of algebraic integers $\alpha$ from $K$ of height at most $H$ for which
\begin{equation} \label{eq:normeq}
N_{K/\Q}(\alpha)=a  
\end{equation}
is at most $\exp(c_8\log H/\log \log H)$.
Therefore, by~\eqref{eq:T1},  and~\eqref{eq:normeq}, the number of $\card{ I_0}$-tuples $(\nu_i, i\in I_0)$ which give a solution of~\eqref{eq:maineq} is at most $\exp(c_{9}\log H/\log\log H)$. Recalling $m = s+1$, we see that our bound~\eqref{MnKm} now 
follows from~\eqref{eq:cases1}.

We  now establish~\eqref{M*nKm}. We first remark by Lemmas~\ref{lem:H0} and~\ref{lem:Ha} that
\begin{equation} \label{eq:ai}
0< a_i \leq (2H)^d
\end{equation}
and
\begin{equation} \label{eq:ainu}
\wH(a_i\nu_i)\leq 2^{d-1}H^d,
\end{equation}
for $i=1,\ldots,n$. 
Moreover, without loss of generality we can assume that $I \setminus I_0$ is not empty. Indeed, if $I \setminus I_0$ is empty, then we can replace an arbitrary coordinate $\nu_i,i\in I$, by its inverse $\nu_i^{-1}$. 

In view of~\eqref{eq:maineq1}, we proceed by fixing $a_i$ for $i$ in $I_0$ and $\nu_i$ for $i$ in $\{1,\ldots,n\}\backslash I$. Since  
$I\backslash I_0$ is non-empty, say that it contains $i_1$. We further fix $\nu_i$ for $i$ in $I\backslash I_0$ with $i\neq i_1$, and then  
the corresponding leading coefficient $a_i$ is also  fixed. Let  
$$
\beta=\prod_{i\in I_0}a^{|k_i|}_i\prod_{\substack{i\in I\backslash I_0 \\ i \ne i_1}}(a_i\nu_i)^{|k_i|},
$$
which is actually a fixed non-zero algebraic integer, then $N_{K/\Q}(\beta)$ is a fixed non-zero integer.   
Note that the left-hand side of~\eqref{eq:maineq1} is an algebraic integer, so $\beta \nu_{i_1}$ is an algebraic integer, and then $N_{K/\Q}(\beta\nu_{i_1})$ is also an algebraic integer. Thus, the leading coefficient $a_{i_1}$ divides $N_{K/\Q}(\beta)$. It follows that the prime factors of $a_{i_1}$ divide 
$$
\prod_{i\in I_0}a_i\prod_{\substack{i\in I\backslash I_0 \\ i \ne i_1}}N_{K/\Q}(a_i\nu_i). 
$$
Since the heights of  $\nu_1, \ldots, \nu_n$ are at most $H$, we see, as in 
the proof of the estimate~\eqref{eq:T1},
that there are at most $\exp(c_{10}\log H/\log\log H)$ possibilities for the leading coefficient $a_{i_1}$. Note that by Lemma~\ref{lem:H0} there are at most $2(2H)^d$ possibilities for the constant coefficient of the minimal polynomial of $\nu_{i_1}$. 
 Thus, by Lemma~\ref{lem:coeff}, there are at most
\begin{equation} \label{eq:22}
H^d\exp(c_{11} \log H/\log \log H)
\end{equation}
possible values  of $\nu_{i_1}$ that we need to consider. In total we have, by~\eqref{B*K}, \eqref{eq:ai} 
and~\eqref{eq:22}, at most
\begin{align*}
\binom{n}{m}\(2c_3(\log H)^{(m-1)}\)^m (2H)^{d\card{ I_0}} & H^{2d(n-\card{ I_0}-1)}H^d \\
& \exp(c_{11} \log H/ \log \log H)
\end{align*}
equations of the form~\eqref{eq:maineq1}. Since $\card{ I_0}\geq\lceil \frac{m}{2}\rceil$, the number of such equations is at most
\begin{equation} \label{eq:total}
H^{2dn-d(\lceil \frac{m}{2}\rceil +1)}\exp(c_{12}\log H/ \log \log H).
\end{equation}

 Let us put
\begin{equation} \label{eq:gamma0}
\gamma_0=\prod_{i\in I_0}a_i^{|k_i|}\prod_{i\in I\backslash I_0}(a_i\nu_i)^{|k_i|}
\end{equation}
and
$$
\gamma_1=\prod_{i\in I\backslash I_0}a^{|k_i|}_i.
$$
Notice that once $\nu_i$ is fixed for $i$ in $I\backslash I_0$, so is $a_i$ and thus $\gamma_1$ is fixed. 
Then,~\eqref{eq:maineq1} can be rewritten as
\begin{equation} \label{eq:maineq2}
\gamma_1\prod_{i\in I_0}(a_i\nu_i)^{|k_i|}=\gamma_0,
\end{equation}
and we seek an estimate for the number of solutions of~\eqref{eq:maineq2} in algebraic numbers $\nu_i$ from $\cB^*_K(H)$ with leading coefficient $a_i$ for $i \in I_0$. 

Note that $\gamma_0$ is an algebraic integer and $\gamma_1$ is an integer. Let $q_1,\ldots,q_t$ be the prime factors of
$$
\prod_{i\in I_0}a_i\prod_{i\in I\backslash I_0}N_{K/\Q}(a_i\nu_i).
$$
Then, by~\eqref{eq:gamma0} and~\eqref{eq:maineq2}, for each index $i\in I_0$ the prime factors of $N_{K/\Q}(a_i\nu_i)$ are from $\{q_1,\ldots, q_t\}$. It follows from~\eqref{eq:ai}, \eqref{eq:ainu} and Lemma~\ref{lem:H0}  that
$$
\left|\prod_{i\in I_0}a_i\prod_{i\in I\backslash I_0}N_{K/\Q}(a_i\nu_i)\right|  \leq (2H)^{d\card{ I_0}}(2^dH^d)^{d\card{I\backslash I_0}}  \leq (2H)^{d^2n}.
$$

We can now argue as in our proof of~\eqref{MnKm} that the number of solutions of~\eqref{eq:maineq2} in algebraic integers $a_i\nu_i$, $i\in I_0$, from $K$ of height at most $2^{d-1}H^d$ is at most $\exp(c_{13}\log H/\log\log H)$. The result~\eqref{M*nKm} now follows from~\eqref{eq:total}.

\subsection{Proof of Proposition~\ref{prop:Mndm}} 

Let $c_3,c_4,\ldots$ denote positive numbers depending on  $n$ and $d$. 
Notice that if $\pmb{\nu}=(\nu_1,\ldots,\nu_n)$ is a multiplicatively dependent vector of multiplicative rank $s$ whose coordinates are from $\cA^*_d(H)$.
Set $m = s+1$. Then, there are $m$ distinct integers $j_1,\ldots,j_m$ from $\{1,\ldots,n\}$ for which $\nu_{j_1},\ldots,\nu_{j_m}$ are multiplicatively dependent and there are non-zero integers $k_{j_1},\ldots,k_{j_m}$ for which~\eqref{eq:mult} holds,  and by Lemma~\ref{lem:exponent}, we can suppose that~\eqref{eq:expo} holds. Let $I=\{j_1,\ldots,j_m\}$ and $I_0$ be defined as in the proof of 
Proposition~\ref{prop:MnKm}, so that~\eqref{eq:I0}  and~\eqref{eq:maineq} hold.

We first establish~\eqref{Mndm}. Fixing non-zero algebraic integers $\nu_i \in \cA_d(H)$ for $i \in \{1,\ldots,n\}\backslash I_0$, we want to estimate the number of solutions of~\eqref{eq:maineq} in algebraic integers $\nu_i \in \cA_d(H)$ for $i\in I_0$. The number of cases when we consider an equation of the form~\eqref{eq:maineq} is, by~\eqref{eq:expo}, at most
$$
\binom{n}{m}\(2c_3(\log H)^{m-1}\)^{m}A_d(H)^{n-\card{I_0}},
$$
which, by~\eqref{Ad}, is at most
\begin{equation} \label{eq:cases4}
c_4H^{d^2(n-\card{I_0})}(\log H)^{m(m-1)}.
\end{equation}

For each $i\in I_0$, by~\eqref{eq:maineq} the prime factors of $N_{\Q(\nu_i)/\Q}(\nu_i)$ divide 
$$
\prod_{j\in I \setminus I_0} N_{\Q(\nu_j)/\Q}(\nu_j). 
$$  
Just as in the proof of Proposition~\ref{prop:MnKm}, we can apply Lemma~\ref{lem:H0} and Lemma~\ref{lem:psixy} to conclude that, for $i\in I_0$, $N_{\Q(\nu_i)/\Q}(\nu_i)$ is one of at most $T$ integers, where, as in~\eqref{eq:T1},
$$
T<\exp(c_5\log H/\log\log H).
$$
Then, estimating the number of possible choices of the minimal polynomial of $\nu_i$ over the integers by using Lemma~\ref{lem:H0},  we see that there are at most
\begin{equation} \label{eq:viI0}
d\(2(2H)^d+1\)^{d-1}\exp(c_5\log H/\log\log H)
\end{equation}
possible values of each $\nu_i$ for $i \in I_0$.
We now fix $\card{I_0}-1$ of the terms $\nu_i$ with $i$ in $I_0$. Let $i_0 \in I_0$ denote the index of the term which is not fixed. Then, $\nu_{i_0}$ is a solution of
\begin{equation} \label{eq:subeq}
x^{|k_{i_0}|}=\eta_0,
\end{equation}
where
$$
\eta_0=\prod_{\substack{i\in I_0 \\  i\neq i_0}}\nu^{-|k_i|}_i\prod_{i\in I\backslash I_0}\nu^{|k_i|}_i.
$$
If $\nu_{i_0}$ and $\mu_{i_0}$ are two solutions of~\eqref{eq:subeq} from $\cA_d(H)$, then $\nu_{i_0}/\mu_{i_0}$ is a $|k_{i_0}|$-th root of unity. But the degree of $\nu_{i_0}/\mu_{i_0}$ is at most $d^2$, and so there are at most $c_6$ possibilities for $\nu_{i_0}/\mu_{i_0}$ when $d$ is fixed. It follows from~\eqref{eq:viI0} that each 
equation~\eqref{eq:maineq} has at most
\begin{equation} \label{eq:solutions1}
H^{d(d-1)(\card{I_0}-1)}\exp(c_7\log H/\log\log H)
\end{equation}
solutions. Thus by~\eqref{eq:cases4} and~\eqref{eq:solutions1}, we have 
\begin{equation} \label{eq:Mndm1}
M_{n,d,s}(H)<H^{d^2(n-\card{I_0})+d(d-1)(\card{I_0}-1)}\exp(c_8\log H/\log\log H).
\end{equation}
Further, by~\eqref{eq:I0},
\begin{equation} \label{eq:dI0}
d^2(n-\card{I_0})+d(d-1)(\card{I_0}-1)\leq d^2(n-1)-d\(\left\lceil\frac{m}{2}\right\rceil-1\).
\end{equation}
Now,~\eqref{Mndm} follows from~\eqref{eq:Mndm1} and~\eqref{eq:dI0}.

We next establish~\eqref{M*ndm}. 
For each $i\in I$, let $a_i$ denote the leading coefficient of the minimal polynomial of $\nu_i$ over the integers. 
Without loss of generality, we can assume that $I \setminus I_0$ is not empty. Indeed, if $I \setminus I_0$ is empty, then we can replace an arbitrary coordinate $\nu_i,i\in I$, by its inverse $\nu_i^{-1}$.

In view of~\eqref{eq:maineq1}, we proceed by first fixing positive integers $a_i$ for $i \in I_0$. Since $I\backslash I_0$ is non-empty, say that it contains $i_1$. We next fix $\nu_i$ for $i$ in $i \in \{1,\ldots,n\}\backslash I_0$ with $i \neq i_1$, and then the corresponding $a_i$ is also fixed.  Let 
$$
\beta=\prod_{i\in I_0}a^{|k_i|}_i\prod_{\substack{i\in I\backslash I_0 \\ i \ne i_1}}(a_i\nu_i)^{|k_i|}, 
$$
which is a fixed non-zero algebraic integer. 
Notice that the left-hand side of~\eqref{eq:maineq1} is an algebraic integer, so $\beta \nu_{i_1}$ is also an algebraic integer, and thus as in the proof of~\eqref{M*nKm} the prime factors of the leading coefficient $a_{i_1}$ divide 
$$
\prod_{i\in I_0}a_i\prod_{\substack{i\in I\backslash I_0 \\ i \ne i_1}}N_{\Q(\nu_i)/\Q}(a_i\nu_i). 
$$
 Since the heights of $\nu_1, \ldots, \nu_n$ are at most $H$ and their degrees are all equal to $d$, we see, as in the proof of~\eqref{eq:T1}, that  there are at most $\exp(c_9 \log H/ \log\log H)$ possibilities for the leading coefficient $a_{i_1}$. 
Then, combining this result with Lemma~\ref{lem:H0}, we know that the number of the possibilities for the minimal polynomial of $\nu_{i_1}$ is at most 
$$
H^{d^2}\exp(c_{10} \log H / \log \log H).
$$
Thus, there are at most
\begin{equation} \label{eq:nu_i1}
H^{d^2}\exp(c_{11} \log H / \log \log H)
\end{equation}
possible values of $\nu_{i_1}$ that we need to  consider. 

Hence, the number of cases of the equation~\eqref{eq:maineq1} to be considered is, by~\eqref{eq:expo}, \eqref{eq:ai} and~\eqref{eq:nu_i1}, at most
\begin{align*}
\binom{n}{m}\(2c_3(\log H)^{m-1}\)^{m}(2H)^{d\card{I_0}}  &A^*_d(H)^{n-\card{I_0}-1}H^{d^2} \\ 
& \exp(c_{11} \log H / \log \log H),
\end{align*}
which, by~\eqref{A*d}, is at most
\begin{equation} \label{eq:cases5}
H^{d(d+1)(n-\card{I_0}-1)+d\card{I_0}+d^2}\exp(c_{12} \log H / \log \log H).
\end{equation}

We  now estimate the number of solutions of~\eqref{eq:maineq1} in algebraic numbers $\nu_i\in \cA^*_d(H)$ for $i \in I_0$  with minimal polynomial having leading coefficient $a_i$.   
It follows from~\eqref{eq:maineq1} that for each $i\in I_0$ the prime factors of $N_{\Q(\nu_i)/\Q}(a_i\nu_i)$ divide
$$
\prod_{j\in I_0}a_j\prod_{j\in I\backslash I_0}N_{\Q(\nu_j)/\Q}(a_j\nu_j).
$$
Thus, by Lemma~\ref{lem:H0}, Lemma~\ref{lem:Ha} and Lemma~\ref{lem:psixy}, as in the proof of~\eqref{eq:T1}, there is a set of at most $T$ integers, where
$$
T<\exp(c_{13}\log H/\log\log H),
$$
and $N_{\Q(\nu_i)/\Q}(a_i\nu_i)$ belongs to that set. Since $a_i$ is fixed, the norm $N_{\Q(\nu_i)/\Q}(\nu_i)$ also belongs to a set of cardinality at most $T$ for $i \in I_0$. 
Notice that for the minimal polynomial of $\nu_i, i
\in I_0$, if $N_{\Q(\nu_i)/\Q}(\nu_i)$ is fixed, then the constant coefficient is also fixed, because  the leading coefficient $a_i$ has already been fixed.  
Hence, counting possible choices of the minimal polynomial of $\nu_i$ by using Lemma~\ref{lem:H0}, we see that there are at most
\begin{equation} \label{eq:viI0*}
H^{d(d-1)}\exp(c_{14}\log H/\log\log H)
\end{equation}
possible values of $\nu_i$ for $i \in I_0$. We now fix $\card{I_0}-1$ of the coordinates $\nu_i$ with $i\in I_0$ and argue as before to conclude from~\eqref{eq:viI0*} that each equation~\eqref{eq:maineq1} has at most
\begin{equation} \label{eq:solutions2}
H^{d(d-1)(\card{I_0}-1)}\exp(c_{15}\log H/\log\log H)
\end{equation}
solutions. Thus, by~\eqref{eq:cases5} and~\eqref{eq:solutions2}, we obtain 
\begin{equation} \label{eq:M*ndm1}
\begin{split}
M^*_{n,d,s}(H) &<H^{d(d+1)(n-\card{I_0}-1)+d\card{I_0}+d^2+d(d-1)(\card{I_0}-1)} \\
& \qquad \qquad \qquad \qquad \qquad \exp(c_{16}\log H/\log\log H).
\end{split}
\end{equation}
Observing that
\begin{align*}
&d(d+1)(n-\card{I_0}-1)+d\card{I_0}+d^2 +d(d-1)(\card{I_0}-1) \\
& \quad =d(d+1)(n-1)-d(\card{I_0}-1),
\end{align*}
our result~\eqref{M*ndm} now follows from~\eqref{eq:I0} and~\eqref{eq:M*ndm1}.

\section{Proof of Main Results} 

\subsection{Proof of Theorem~\ref{thm:MnK}} 

By~\eqref{MnK=} and~\eqref{MnKm}, there is a positive number $c$ which depends on $n$ and $K$ such that
\begin{equation} \label{eq:MnKH}
\begin{split}
L_{n,K}(H) =L_{n,K,0}(H)&+L_{n,K,1}(H)\\
&+O(H^{d(n-1)-d}\exp(c\log H/\log\log H)).
\end{split}
\end{equation}

Each such vector $\pmb{\nu}$ of multiplicative rank $0$ has an index $i_0$ for which $\nu_{i_0}$ is a root of unity. Accordingly, we have
$$
nw(B_K(H)-w-1)^{n-1}\leq L_{n,K,0}(H)\leq nwB_K(H)^{n-1},
$$
and thus by~\eqref{BK}
\begin{equation} \label{eq:MnK1}
\begin{split}
L_{n,K,0}(H)=nwC_1(K)^{n-1}&H^{d(n-1)}(\log H)^{r(n-1)}\\
& +O\(H^{d(n-1)}(\log H)^{r(n-1)-1}\).
\end{split}
\end{equation}

We next estimate $L_{n,K,1}(H)$. Each such vector $\pmb{\nu}$ of rank $1$ has a pair of indices $(i_0,i_1)$, two coordinates $\nu_{i_0}$ and $\nu_{i_1}$ from $\cB_K(H)$ and non-zero integers $k_{i_0}$ and $k_{i_1}$ such that $\nu^{k_{i_0}}_{i_0}\nu^{k_{i_1}}_{i_1}=1$. There are $n(n-1)/2$ pairs $(i_0,i_1)$.
By Lemma~\ref{lem:exponent}, the number of such vectors associated with two distinct such pairs $(i_0,i_1)$ and $(i_2,i_3)$ is
\begin{equation} \label{eq:twop}
O\(B_K(H)^{n-2} (\log H)^4\).
\end{equation}

We now estimate the number of $n$-tuples $\pmb{\nu}$ whose coordinates are from $\cB_K(H)$ for which
$$
\nu^{k_{i_0}}_{i_0}\nu^{k_{i_1}}_{i_1}=1
$$
with $(k_{i_0},k_{i_1})$ equal to $(t,t)$ or $(t,-t)$ for some non-zero integer $t$. We have $(B_K(H)-w-1)^{n-2}$ choices for the coordinates of $\pmb{\nu}$ associated with indices different from $i_0$ and $i_1$, because they are non-zero and not roots of unity. Also there are $B_K(H)-w-1$ choices for the $i_0$-th coordinate, and once it is determined, say $\nu_{i_0}$, then the $i_1$-th coordinate is of the form $\eta\nu_{i_0}$ or $\eta\nu^{-1}_{i_0}$, where $\eta$ is a root of unity from $K$. Note that
$$
\wH(\eta\nu_{i_0})=\wH(\nu_{i_0})=\wH(\eta\nu^{-1}_{i_0}),
$$
and that $\eta\nu^{-1}_{i_0}$ is only counted when  $\nu_{i_0}$ is a unit in the ring of algebraic integers of $K$. Thus, we have
\begin{equation} \label{eq:majority}
\(B_K(H)-w-1\)^{n-2}\(\(B_K(H)-w-1\)w+\(U_K(H)-w\)w\)
\end{equation}
such vectors of rank $1$ associated with $(i_0,i_1)$. So, by~\eqref{BK},
 \eqref{eq:twop}, \eqref{eq:majority} and Lemma~\ref{lem:units}, the number of such vectors of rank $1$ associated with an exponent vector $\mathbf{k}$ with $k_{i_0}=t$, $k_{i_1}=\pm t$ for $t$ a non-zero integer is
\begin{equation} \label{eq:majority1}
\begin{split}
\frac{n(n-1)}{2}wC_1(K)^{n-1}H^{d(n-1)}&(\log H)^{r(n-1)}\\
& +O\(H^{d(n-1)}(\log H)^{r(n-1)-1}\).
\end{split}
\end{equation}

It remains to estimate the number of such vectors of multiplicative rank $1$ associated with an exponent vector $\mathbf{k}$ with $k_{i_0}=t_1$ and $k_{i_1}=t_2$ with $t_1\neq \pm t_2$ and $t_1$ and $t_2$ non-zero  integers.  
Let $\nu_1, \nu_2 \in \cB_K(H)$ be  associated with $t_1,-t_2$ respectively. In this case
$$
\nu^{t_1}_1=\nu^{t_2}_2.
$$

We first consider the case when $t_1$ and $t_2$ are of opposite signs. Then, $\nu_1$ and $\nu_2$ are units in the ring of algebraic integers of $K$, and so by Lemma~\ref{lem:units} the number of such vectors is
\begin{equation} \label{eq:extra1}
O\((\log H)^{2r}B_K(H)^{n-2}\).
\end{equation}

It remains to consider the case when $t_1$ and $t_2$ are both positive. Without loss of generality, we assume that $0<t_1 <  t_2$, and also $t_2 \ll \log H$ by Lemma~\ref{lem:exponent}. 

If $t_2 = 2t_1$, then $\nu_1$ is determined by $\nu_2^2$ up to a root of unity contained in $K$, and also we have $\wH(\nu_2) \le H^{1/2}$. 
So, the number of such pairs $(\nu_1,\nu_2)$ is $O(H^{d/2}(\log H)^r)$ by using~\eqref{BK}, and thus the number of such vectors of rank $1$ is 
\begin{equation} \label{eq:t22t1}
O\(H^{d/2}(\log H)^rB_K(H)^{n-2}\). 
\end{equation}

If $t_1$ divides $t_2$ and $t_2/t_1 \ge 3$, then we have $\wH(\nu_2) \le H^{1/3}$, and so as the above the number of such vectors of rank $1$ is 
\begin{equation} \label{eq:t23t1}
O\(H^{d/3}(\log H)^{r+1}B_K(H)^{n-2}\). 
\end{equation}

Now, we assume that $t_1$ does not divide $t_2$. 
 Let $t$ be the greatest common divisor of $t_1$ and $t_2$. Note that $t_1/t \ge 2$ and $t_2/t \ge 3$. 
 Put
\begin{equation} \label{eq:nu1nu2}
\gamma=\nu^{t_1}_1=\nu^{t_2}_2,
\end{equation}
and let $\beta$ be a root of $x^{t_1t_2}-\gamma$. Observe that
$$
\beta^{t_1}=\eta_1\nu_2\quad \text{and}\quad \beta^{t_2}=\eta_2\nu_1
$$
for some $t_1t_2$-th roots of unity $\eta_1$ and $\eta_2$. There exist integers $u$ and $v$ with $ut_1+vt_2=t$, and so
$$
\beta^t=\beta^{t_1u}\beta^{t_2v}
=\eta_1^u\nu_2^u\eta_2^v\nu^v_1=\eta\alpha
$$
for $\eta$ a $t_1t_2$-th root of unity and $\alpha$ an algebraic integer of $K$. Therefore
\begin{equation} \label{eq:theta}
(\eta\alpha)^{t_2/t}=\beta^{t_2}=\eta_2\nu_1,
\end{equation}
and so
\begin{equation} \label{eq:Htheta}
\wH(\alpha)^{t_2/t}=\wH(\nu_1).
\end{equation}
Since $\wH(\nu_1)\leq H$, we see, from~\eqref{eq:theta} and~\eqref{eq:Htheta}, that $\nu_1$ is determined up to a $t_1t_2$-th root of unity, 
by an algebraic integer of $K$ of height at most $H^{t/t_2}\le H^{1/3}$. Thus, by~\eqref{BK} and Lemma~\ref{lem:exponent}, 
the number of such pairs $(\nu_1,\nu_2)$ is $O(H^{d/3}(\log H)^{r+4})$, hence the number of such vectors of rank $1$ is
\begin{equation} \label{eq:extra2}
O\(H^{d/3}(\log H)^{r+4}B_K(H)^{n-2}\).
\end{equation}
Thus, by~\eqref{BK}, \eqref{eq:majority1}, \eqref{eq:extra1}, \eqref{eq:t22t1}, \eqref{eq:t23t1} and~\eqref{eq:extra2}, we get 
\begin{equation} \label{eq:MnK2}
\begin{split}
L_{n,K,1}(H) =\frac{n(n-1)}{2}wC_1&(K)^{n-1}  H^{d(n-1)}(\log H)^{r(n-1)} \\
& +O\(H^{d(n-1)}(\log H)^{r(n-1)-1}\).
\end{split}
\end{equation}

The estimate~\eqref{MnK} now follows from~\eqref{eq:MnKH}, \eqref{eq:MnK1} and~\eqref{eq:MnK2}.

Finally, assume that $K$ is the rational number field $\Q$ or an imaginary quadratic field. Then, $r=0$, and so $B_K(H) = C_1(K)H^d+O(H^{d-1})$ by \eqref{BK0}. Repeating the above process, we obtain
$$
L_{n,K,0}(H) =  nwC_1(K)^{n-1}H^{d(n-1)} + O(H^{d(n-1)-1})
$$
and 
$$
L_{n,K,1}(H) = \frac{n(n-1)}{2}wC_1(K)^{n-1}  H^{d(n-1)} + O\(H^{d(n-3/2)}\),
$$
where the second error term comes from \eqref{eq:t22t1} (and also~\eqref{eq:majority} when $d=2$). 
Hence, noticing~\eqref{eq:MnKH} and $d=1$ or $2$, we obtain~\eqref{MnQ}.

\subsection{Proof of Theorem~\ref{thm:MnK2}} 

By~\eqref{MnK=} and~\eqref{M*nKm}, we have
\begin{equation} \label{eq:M*nK1234} 
\begin{split}
L^*_{n,K}(H)=  L^*_{n,K,0}&(H) + L^*_{n,K,1}(H) \\
& +O\(H^{2d(n-1)-d}\exp(c_2\log H/\log\log H)\).
\end{split}
\end{equation}

As in the proof of Theorem~\ref{thm:MnK}, we obtain, by using~\eqref{B*K} in place of~\eqref{BK},
\begin{equation} \label{eq:M*nK1}
L^*_{n,K,0}(H)=nwC_2(K)^{n-1}H^{2d(n-1)}+O\(H^{2d(n-1)-1}(\log H)^{\sigma(d)}\),  
\end{equation}
where $\sigma(1)=1$ and $\sigma(d)=0$ for $d>1$. 

Similarly, we find that
\begin{equation} \label{eq:M*nK2}
\begin{split}
L^*_{n,K,1}(H)=n(n-1)wC_2(K)^{n-1}&H^{2d(n-1)}\\
& +O\(H^{2d(n-1)-1}(\log H)^{\sigma(d)}\), 
\end{split} 
\end{equation}
where the main difference from the proof of~\eqref{eq:MnK2} is that the contribution from the exponent vectors $(k_{i_0},k_{i_1})$ equal to $(t,t)$ is the same as when $(k_{i_0},k_{i_1})$ is equal to $(t,-t)$.

The desired result now follows from~\eqref{eq:M*nK1234}, \eqref{eq:M*nK1} and~\eqref{eq:M*nK2} by noticing that 
$$
L^*_{2,K}(H)=  L^*_{2,K,0}(H) + L^*_{2,K,1}(H).
$$

\subsection{Proof of Theorem~\ref{thm:Mnd}} 

We  first establish~\eqref{Mnd}. By~\eqref{Mnd=} and~\eqref{Mndm}, we have
\begin{equation} \label{eq:Mnd12}
\begin{split}
M_{n,d}(H)=M_{n,d,0}&(H)+M_{n,d,1}(H) \\
&+O\(H^{d^2(n-1)-d}\exp(c_1\log H/\log\log H)\).
\end{split}
\end{equation}

Note that each such vector $\pmb{\nu}$ of multiplicative rank 0 has a coordinate which is a root of unity of degree $d$. So, in view of the definition of $w_0(d)$ in~\eqref{eq:w0w1} we have 
$$
nw_0(d)\(A_d(H)-w_0(d)\)^{n-1} \le M_{n,d,0}(H)\le nw_0(d)A_d(H)^{n-1}, 
$$
and thus by~\eqref{Ad} and~\eqref{eq:w0w1}, 
\begin{equation} \label{eq:Mnd1}
\begin{split}
M_{n,d,0}(H) = nw_0(d)C_5(d)^{n-1}&H^{d^2(n-1)} \\
& + O\(H^{d^2(n-1)-d}(\log H)^{\rho(d)}\). 
\end{split}
\end{equation} 
We remark that $M_{n,d,0}(H) =0$ if $w_0(d)=0$. 

Moreover, arguing as in the proof of Theorem~\ref{thm:MnK}, 
 we find that the main contribution to $M_{n,d,1}(H)$ comes from vectors associated with an exponent vector $\mathbf{k}$ which has two non-zero components one of which is $t$ and the other of which is $\pm t$ with $t$ a non-zero integer. Notice that
  the number $U_d(H)$ of algebraic integers which are units of degree $d$ and height at most $H$ satisfies (by using Lemma~\ref{lem:H0})
\begin{equation} \label{eq:Ud}
U_d(H)=O\(H^{d(d-1)}\). 
\end{equation} 
 We then deduce from~\eqref{Ad}, \eqref{eq:w0w1},  \eqref{eq:Ud} and Lemma~\ref{lem:special} that
\begin{equation} \label{eq:Mnd2}
M_{n,d,1}(H)=n(n-1)C_5(d)^{n-1}H^{d^2(n-1)}
 +O\(H^{d^2(n-1)-d/2}\);
\end{equation}
if furthermore $d=2$ or $d$ is odd, then 
\begin{equation} \label{eq:Mnd2'}
M_{n,d,1}(H)=n(n-1)C_5(d)^{n-1}H^{d^2(n-1)}
 +O\(H^{d^2(n-1)-d}\log H\). 
\end{equation}
Here, we need to note that for an algebraic integer $\alpha$ of degree $d$ and a root of unity $\eta \ne \pm 1$, $\alpha\eta$ might not be of degree $d$. 

The desired asymptotic formula~\eqref{Mnd} now follows from~\eqref{eq:Mnd12}, \eqref{eq:Mnd1} 
and~\eqref{eq:Mnd2}. In order to show~\eqref{Mnd'}, we use~\eqref{eq:Mnd2'} instead of~\eqref{eq:Mnd2}. 
Besides,~\eqref{M2d} follows from~\eqref{eq:Mnd1} and~\eqref{eq:Mnd2'} by noticing that 
$$
M_{2,d}(H) = M_{2,d,0}(H) + M_{2,d,1}(H). 
$$

Finally, we   prove~\eqref{M*nd}, \eqref{M*nd'} and~\eqref{M*2d}. By~\eqref{Mnd=} and~\eqref{M*ndm}, we have 
\begin{equation} \label{eq:M*nd1234}
\begin{split}
M^*_{n,d}(H)=   &M^*_{n,d,0}(H)  + M^*_{n,d,1}(H) \\
&  +O\(H^{d(d+1)(n-1)-d}\exp(c_2\log H/\log\log H)\).
\end{split}
\end{equation}

As before, we have, by using~\eqref{A*d},
\begin{equation} \label{eq:M*nd1}
\begin{split}
M^*_{n,d,0}(H)=nw_0(d)C_6(d)^{n-1}&H^{d(d+1)(n-1)} \\
&+ O\(H^{d(d+1)(n-1)-d}(\log H)^{\vartheta(d)}\).
\end{split}
\end{equation} 
As in~\eqref{eq:Mnd2} and~\eqref{eq:Mnd2'}, we find that 
\begin{equation} \label{eq:M*nd2}
\begin{split}
M^*_{n,d,1}(H)=2n(n-1)C_6&(d)^{n-1}H^{d(d+1)(n-1)} \\
&+O\(H^{d(d+1)(n-1)-d/2}\log H\); 
\end{split}
\end{equation}
if furthermore $d=2$ or $d$ is odd, we have 
\begin{equation} \label{eq:M*nd2'}
\begin{split}
M^*_{n,d,1}(H)=2n(n-1)&C_6(d)^{n-1}H^{d(d+1)(n-1)} \\
&+O\(H^{d(d+1)(n-1)-d}(\log H)^{\vartheta(d)}\). 
\end{split}
\end{equation}

So,~\eqref{M*nd} follows from~\eqref{eq:M*nd1234},  \eqref{eq:M*nd1} and~\eqref{eq:M*nd2}; 
then using~\eqref{eq:M*nd2'} instead of~\eqref{eq:M*nd2} gives~\eqref{M*nd'}. 
In order to deduce ~\eqref{M*2d}, we apply~\eqref{eq:M*nd1} and~\eqref{eq:M*nd2'} 
and notice that 
$$
M^*_{2,d}(H) = M^*_{2,d,0}(H) + M^*_{2,d,1}(H). 
$$

\section{Lower Bound} 
\label{sec:low}

In this section, we shall prove that~\eqref{MnKm} is sharp, apart from a factor $H^{o(1)}$, 
when $n=s+1$ is even and $K=\Q$. 

We need the following slight extension of~\cite[Lemma~2.3]{MS}. 

\begin{lem}\label{lem:prod=}
Let $k$ and $q$ be integers with $k\geq 2$ and $q \geq 2$.  Let  $\pmb{\gamma}=(\gamma_1, \ldots, \gamma_k) $ with $\gamma_1, \ldots, \gamma_k$
positive real numbers. 
Then, there exists a positive number 
$\Gamma(q,\pmb{\gamma}) $ such that for $T \to \infty$, we have
$$
\ssum_{\substack{ a_1\cdots a_k= b_1\cdots b_k\\
\gcd(a_ib_i, q) = 1\\
1\le a_i,b_i \le T^{\gamma_i}\\i=1, \ldots, k}} \,\, 1 \quad \sim \quad \Gamma(q,\pmb{\gamma}) T^{\gamma}(\log T)^{(k-1)^2}, 
$$
where $\gamma = \gamma_1 + \cdots +\gamma_k$. 
\end{lem}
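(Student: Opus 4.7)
The plan is to adapt the Dirichlet series method behind Lemma~2.3 of~\cite{MS}, incorporating two modifications: the bounds $T^{\gamma_i}$ now vary with $i$, and the coprimality condition $\gcd(a_ib_i,q)=1$ restricts the variables to integers supported on primes $p\nmid q$. Both changes are compatible with that method: the coprimality simply deletes the Euler factors at primes dividing $q$, while the varying $\gamma_i$ only alter the weightings in the final contour integral.

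The central object is the multiple Dirichlet series
\[
F(\mathbf{s}, \mathbf{t}) \;=\; \ssum_{\substack{a_1\cdots a_k = b_1\cdots b_k\\ \gcd(a_ib_i, q) = 1}} \prod_{i=1}^k a_i^{-s_i} b_i^{-t_i},
\]
which admits an Euler product over primes $p\nmid q$ whose local factors arise from the diagonal constraint on $p$-adic valuations. Via an explicit $\gcd$-parameterization --- for $k=2$ the classical $a_1=de$, $a_2=fg$, $b_1=df$, $b_2=eg$ with $\gcd(e,f)=1$, and for general $k$ an iterative analogue built on $k^2$ auxiliary variables with M\"obius-type corrections --- one identifies $F$ as a ratio of Riemann zeta functions of linear forms in $(\mathbf{s}, \mathbf{t})$: the numerator contributes poles on the $k^2$ hyperplanes $s_i+t_j=1$, while the constraint $\prod a_i = \prod b_j$ introduces compensating zeta factors in the denominator.

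I would then extract the count by a $2k$-dimensional Perron formula with weights $T^{\gamma_i s_i} T^{\gamma_i t_i}/(s_it_i)$, the main term coming from the residue at the intersection of the surviving pole hyperplanes. A dimension count shows that exactly $(k-1)^2$ independent logarithmic directions survive the cancellation with the denominator zeta factor, producing the asserted main term $\Gamma(q,\pmb{\gamma})\, T^{\gamma}(\log T)^{(k-1)^2}$; the constant $\Gamma(q,\pmb{\gamma})$ is read off from the residue and factors as an Euler product (removing the local contribution at primes dividing $q$) times a positive polyhedral quantity in $\pmb{\gamma}$.

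The chief technical obstacle is justifying the contour shifts: one must establish uniform polynomial bounds on $F$ in the imaginary directions of $(\mathbf{s},\mathbf{t})$, obtained from the convexity bound for $\zeta$ together with the explicit denominator structure, in order to bound the integral along the shifted contour by a power of $T$ strictly smaller than $T^{\gamma}$. This is delicate in the present generality because the optimal contour positions depend on $\pmb{\gamma}$, so the bounds must be uniform over a compact range of exponents; however, no new analytic input beyond standard zeta bounds is needed, so the extension from the setting of~\cite{MS} is a matter of bookkeeping rather than new ideas.
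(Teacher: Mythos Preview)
Your proposal is correct and follows the same route as the paper: both defer to the argument of~\cite[Lemma~2.3]{MS} and observe that the coprimality condition $\gcd(a_ib_i,q)=1$ simply deletes the Euler factors at primes $p\mid q$. The paper's proof is in fact even terser than yours---it says only that the proof proceeds along the same lines as in~\cite{MS} with this single change---and in particular it treats the varying exponents $\gamma_i$ as already present in~\cite[Lemma~2.3]{MS}, so your first ``modification'' is not actually a modification at all.
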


\begin{proof} 
The proof proceeds along the same lines as in the proof of~\cite[Lemma~2.3]{MS}. 
The only difference is that the primes $p$ which divide $ q$ are now excluded from 
the Euler products that appear in~\cite{MS}. 
\end{proof} 

We show that apart perhaps from the factor $\exp(c_1 \log H/ \log\log H)$ the estimate~\eqref{MnKm} 
in Proposition~\ref{prop:MnKm} is sharp  when 
$n$ is even, $s = n-1$ 
and $K=\Q$. 
\begin{thm} 
\label{thm:MnQm}
Let  $n=2k$, where $k$ is an integer with $k>1$. Then, for sufficiently large $H$, there exists a positive number $c$ depending on 
$n$ such that
\begin{equation} \label{MnQm}
L_{n,\Q,n-1}(H) \ge  cH^{k}(\log H)^{(k-1)^2}.
\end{equation}
\end{thm}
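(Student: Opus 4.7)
The plan is to produce many tuples satisfying the obvious multiplicative relation $a_1\cdots a_k=b_1\cdots b_k$ via Lemma~\ref{lem:prod=}, and then verify that all but a lower-order set of them have multiplicative rank exactly $2k-1$. Apply Lemma~\ref{lem:prod=} with $T=H$, $q=2$, and $\gamma_1=\cdots=\gamma_k=1$; letting
\[
N(H):=\#\bigl\{(a_1,\ldots,a_k,b_1,\ldots,b_k)\in\Z_{\geq 2}^{2k}:\ \gcd(a_ib_i,2)=1,\ a_i,b_j\le H,\ a_1\cdots a_k=b_1\cdots b_k\bigr\},
\]
the lemma gives $N(H)\sim\Gamma H^k(\log H)^{(k-1)^2}$. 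Each such tuple $\pmb{\nu}$ is multiplicatively dependent via $\mathbf{u}:=(1,\ldots,1,-1,\ldots,-1)\in\Z^{2k}$, and since every coordinate is an odd integer at least $3$, none is a root of unity, so its multiplicative rank $s$ lies in $\{1,\ldots,2k-1\}$.

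It then suffices to bound $B(H)$, the number of such tuples with $s\le 2k-2$, by $o(N(H))$, as this would give $L_{n,\Q,n-1}(H)\ge N(H)-B(H)\gg H^k(\log H)^{(k-1)^2}$. Any bad tuple carries a second multiplicative relation $\pmb{\nu}^{\mathbf{w}}=1$ with $\mathbf{w}\in\Z^{2k}\setminus\Z\mathbf{u}$; by Lemma~\ref{lem:exponent} we may take $\|\mathbf{w}\|_\infty\le c(\log H)^{2k-1}$. The dominant contribution comes from ``partial relations'' $\prod_{i\in I}a_i=\prod_{j\in J}b_j$ with $|I|=|J|=r$ and $1\le r<k$: combined with the main relation this forces the complementary identity $\prod_{i\notin I}a_i=\prod_{j\notin J}b_j$, so the $2k$ variables split into two independent blocks of sizes $2r$ and $2(k-r)$, each counted by a separate application of Lemma~\ref{lem:prod=} (treating the case $r=1$ or $k-r=1$ trivially). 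This yields $\ll H^k(\log H)^{(r-1)^2+(k-r-1)^2}$ tuples per index-pair choice; the exponent is maximized at $r=1$ or $r=k-1$ with value $(k-2)^2$, and summing over the finitely many choices of $I,J$ gives a total bound $\ll H^k(\log H)^{(k-2)^2}$.

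Any other shape of $\mathbf{w}$ fails to decouple the variables, and the resulting coupled system forces additional arithmetic constraints (as with $a_i=a_j$, $a_ia_j=b_\ell^2$, or $\mathbf{w}$ with entries outside $\{0,\pm 1\}$). A routine estimate using the $k$-fold divisor function then bounds the count per such $\mathbf{w}$ by $O(H^{k-1}(\log H)^{O(1)})$, and since Lemma~\ref{lem:exponent} confines $\mathbf{w}$ to a polylogarithmic box, the total contribution from these shapes is $O(H^{k-1}(\log H)^{O(1)})$, dominated by the partial-relation case. Consequently $B(H)\ll H^k(\log H)^{(k-2)^2}=o(N(H))$ since $(k-1)^2-(k-2)^2=2k-3\ge 1$ for $k\ge 2$, completing the argument. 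The main obstacle is this uniform control of $B(H)$ across all shapes of $\mathbf{w}$: it relies crucially on the exponent bound of Lemma~\ref{lem:exponent}, which restricts $\mathbf{w}$ to a polylogarithmic range, and on repeated applications of Lemma~\ref{lem:prod=} to whatever smaller multiplicative system remains after one simplest extra relation has been extracted.
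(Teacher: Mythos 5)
Your overall strategy---count all odd solutions of $a_1\cdots a_k=b_1\cdots b_k$ by Lemma~\ref{lem:prod=} and show that tuples of multiplicative rank at most $2k-2$ form a lower-order set---is viable in principle, and the first step is fine (restricting to $a_i,b_i\ge 2$ only removes $O(H^{k-1}(\log H)^{O(1)})$ tuples, so the asymptotic $N(H)\sim \Gamma H^k(\log H)^{(k-1)^2}$ survives). The gap is in the bad-set analysis, where your taxonomy of second relations is both incomplete and contains a false uniform claim. First, a bad tuple need not satisfy any partial relation $\prod_{i\in I}a_i=\prod_{j\in J}b_j$ with $\pm1$ exponents: for $k=2$ the tuple $(a_1,a_2,b_1,b_2)=(3,27,9,9)$ satisfies the main relation and has rank $1$, yet its only short extra relations (such as $a_1^2=b_1$) have exponents outside $\{0,\pm1\}$. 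Second, the assertion that ``any other shape of $\mathbf{w}$'' contributes $O(H^{k-1}(\log H)^{O(1)})$ per shape is false: for $k=2$ the mixed-sign vector $\mathbf{w}$ encoding $a_1b_1=a_2b_2$ is not of your partial-relation form, yet combined with the main relation it forces exactly $a_1=b_2$, $a_2=b_1$ and so admits $\asymp H^2=H^k$ solutions, not $O(H^{k-1+o(1)})$. (These tuples happen to lie inside an $r=1$ partial-relation class, but your argument never establishes such an absorption, and for general $k$ and general mixed-sign or higher-exponent $\mathbf{w}$ in the polylogarithmic box of Lemma~\ref{lem:exponent} the required uniform bound $O(H^k(\log H)^{(k-2)^2})$ is precisely the hard technical crux, not a ``routine divisor-function estimate''.) As written, therefore, the bound $B(H)\ll H^k(\log H)^{(k-2)^2}$ is not proved.

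The paper sidesteps this entire difficulty with a marker-prime construction: fixing distinct odd primes $p_2,q_2,\ldots,p_k,q_k$, it sets $\nu_1=2p_2\cdots p_k a_1$, $\nu_{k+1}=2q_2\cdots q_k b_1$, $\nu_i=q_ia_i$ and $\nu_{k+i}=p_ib_i$ for $2\le i\le k$, with $a_1\cdots a_k=b_1\cdots b_k$ and all $a_i,b_i$ coprime to $2p_2q_2\cdots p_kq_k$. Comparing exponents of $2$, of each $q_i$ and of each $p_i$ in any multiplicative relation shows the exponent vector must be proportional to $(1,\ldots,1,-1,\ldots,-1)$, so \emph{every} constructed tuple has rank exactly $n-1$ by unique factorization---no generic-versus-exceptional dichotomy, no appeal to Lemma~\ref{lem:exponent}, and no case analysis over second relations. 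A single application of Lemma~\ref{lem:prod=} with $q=2p_2q_2\cdots p_kq_k$ and $T=c_1H$ (the constant absorbing the marker primes in the height restriction $|\nu_i|\le H$) then yields the lower bound directly. You should either adopt this rigidification of the construction or supply a complete, uniform treatment of all exponent vectors $\mathbf{w}$ in the polylogarithmic box; the latter is substantially more work than your sketch suggests.
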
 

\begin{proof}
 Fix $n-2$ distinct odd primes $p_i$, $q_i$, $i=2, \ldots, k$. Given positive integers $a_1,\ldots,a_k,b_1,\ldots,b_k$, we first set 
$$
\nu_1= 2 p_2\cdots p_k a_1 \mand
\nu_{k+1} = 2 q_2\cdots q_k b_1.
$$
After this we set
$$
\nu_i = q_i a_i  \mand
\nu_{k+i} = p_i b_{i} , \quad i = 2, \ldots, k. 
$$
Clearly, if $a_1\cdots a_k = b_1\cdots b_k$
 with $\gcd(a_ib_i, 2 p_2q_2\cdots p_kq_k) =1$ for any $2\le i \le k$, then the integer vector $\pmb{\nu}=(\nu_1,\ldots,\nu_n)$ is multiplicatively dependent of rank $n-1$ by noticing that $\nu_1\cdots \nu_k = \nu_{k+1}\cdots \nu_n$ and that there is no non-empty subset $\{i_1,\ldots ,i_m\}$ of $\{1, \ldots ,n\}$ of size less than $n$ for which 
\begin{equation} \label{eq:mult1}
\nu_{i_1}^{j_{i_1}}\cdots \nu_{i_m}^{j_{i_m}}=1,
\end{equation}
with $j_{i_1}, \ldots ,j_{i_m}$ non-zero integers.

For sufficiently large $H$, 
we choose such integers $a_i, b_i \le c_1H$
 for some positive number $c_1$ depending only on the above fixed primes such that we have $|\nu_i| \le H$ for each $1\le i \le n$. Then, each such vector $\pmb{\nu}$ contributes to $L_{n,\Q,n-1}(H)$. Now  applying Lemma~\ref{lem:prod=} to count such vectors (taking $T=c_1H$ and $\gamma_i=1$ for each $i=1, \ldots, k$), we derive
$$
L_{n,\Q,n-1}(H) \ge  cH^{k}(\log H)^{(k-1)^2}, 
$$
where $c$ is a positive number depending on $n$. 
\end{proof}

\section{Comments} 

It might be of interest to investigate in more detail how tight our bounds are in Propositions~\ref{prop:MnKm} 
and~\ref{prop:Mndm}. In Section~\ref{sec:low} we have taken an initial step in this direction. 

It would be interesting to study  multiplicatively dependent vectors of polynomials 
over finite fields. In this case the degree plays the role of the height.  
While we expect that most of our results can be translated to this case many tools need to be developed and this should be of independent interest.

\section*{Acknowledgements}

The first author was supported in part
by   Gruppo Nazionale per le Strutture Algebriche, Geometriche 
e le loro Applicazioni from Istituto Nazionale di Alta Matematica ``F. Severi''.
The research of the second and third authors was supported by the Australian
Research Council Grant DP130100237.
The research of the fourth author was supported in part by the Canada Research Chairs Program and by Grant A3528 from the Natural Sciences and Engineering Research Council of Canada.

\end{document}